\newtheorem{problem}{Problem}
\newtheorem{theorem}{Theorem}
\newtheorem{lemma}{Lemma}
\newtheorem{remark}{Remark}
\newcommand{\norm}[1]{\lVert#1\rVert}
\newcommand{\abs}[1]{\lvert#1\rvert}
\newcommand{\R}{\mathbb{R}}
\newcommand{\N}{\mathbb{N}}
\newcommand{\diff}{\mathrm{d}}
\newcommand{\tr}{\textcolor{red}}
\newcommand{\tb}{\textcolor{blue}}
\crefname{align}{Eq.}{Eqs.}
\crefname{equation}{Eq.}{Eqs.}
\crefname{figure}{Fig.}{Figs.}
\crefname{table}{Table}{Tables}
\crefname{theorem}{Theorem}{Theorems}
\crefname{definition}{Definition}{Definitions}
\crefname{lemma}{Lemma}{Lemmas}
\crefname{remark}{Remark}{Remarsks}
\crefname{assumption}{Assumption}{Assumptions}
\crefname{proof}{Proof}{Proofs}
\crefname{algorithm}{Algorithm}{Algorithms}
\crefname{problem}{Problem}{Problems}
\crefname{proposition}{Proposition}{Propositions}
\crefname{corollary}{Corollary}{Corollaries}
\crefname{section}{Section}{Sections}
\title{
Successive Convexification with Feasibility Guarantee via Augmented Lagrangian for Non-Convex Optimal Control Problems
}
\author{Kenshiro Oguri
\thanks{K.~Oguri is with the School of Aeronautics and Astronautics, Purdue University, IN 47907, USA ({\tt koguri@purdue.edu}).
}}
\def\shortOrFull{2} 
\newcommand\copyrighttext{%
  \footnotesize \textcopyright 2023 IEEE. Personal use of this material is permitted.
  Permission from IEEE must be obtained for all other uses, in any current or future
  media, including reprinting/republishing this material for advertising or promotional
  purposes, creating new collective works, for resale or redistribution to servers or
  lists, or reuse of any copyrighted component of this work in other works.
  DOI: \underline{\href{https://ieeexplore.ieee.org/document/10383462}{10.1109/CDC49753.2023.10383462}}.
  Presented at 2023 IEEE CDC.
  }
\newcommand\copyrightnotice{%
\begin{tikzpicture}[remember picture,overlay]
\node[anchor=south,yshift=10pt] at (current page.south) {\fbox{\parbox{\dimexpr\textwidth-\fboxsep-\fboxrule\relax}{\copyrighttext}}};
\end{tikzpicture}%
}
\begin{document}

\maketitle
\thispagestyle{empty}
\pagestyle{empty}
\copyrightnotice

\begin{abstract}
This paper proposes an algorithm that solves non-convex optimal control problems with a theoretical guarantee for global convergence to a feasible local solution of the original problem.
The proposed algorithm extends the recently proposed successive convexification (\texttt{SCvx}) algorithm to
address its key limitation: lack of feasibility guarantee to the original non-convex problem.
The main idea of the proposed algorithm is to incorporate the \texttt{SCvx} iteration into an algorithmic framework based on the augmented Lagrangian method to enable the feasibility guarantee while retaining favorable properties of \texttt{SCvx}.
Unlike the original \texttt{SCvx}, our approach iterates on both of the optimization variables and the Lagrange multipliers, which facilitates the feasibility guarantee as well as efficient convergence, in a spirit similar to the alternating direction method of multipliers (ADMM).
Convergence analysis shows the proposed algorithm's strong global convergence to a \textit{feasible} local optimum of the original problem and its convergence rate.
These theoretical results are demonstrated via numerical examples with comparison against the original \texttt{SCvx} algorithm.
\end{abstract}



\section{Introduction}
\label{sec:intro}

This paper proposes a new algorithm for solving non-convex optimal control problems by extending the successive convexification (\texttt{SCvx}) algorithm \cite{Mao2019a,Mao2016}, a recent algorithm based on sequential convex programming (SCP), and presents a convergence analysis of the proposed algorithm.
Most of the real-world problems are non-convex, as seen in aerospace, robotics, and other engineering applications, due to their nonlinear dynamics and/or non-convex constraints.
While {lossless convexification} is available for a certain class of problems \cite{Acikmese2007,Akmee2011a,Harris2021}, many remain non-convex.
Among various options to tackle non-convex problems \cite{Nocedal2006}, 
SCP is gaining renewed interest as a powerful tool in light of the recent advance in convex programming \cite{Malyuta2022,Mao2019a,Mao2016,Bonalli2019b,Bonalli2022a,Boyd2004}.

Any SCP algorithms take the approach that repeats the \textit{convexify-and-solve} process to march toward a local solution of a non-convex problem.
This approach is common to many optimization algorithms, including difference of convex programming, which decomposes a problem into convex and concave parts and approximates the concave part \cite{Yuille2003}; and sequential quadratic programming \cite{Boggs1989}, which is adopted in many nonlinear programming software, such as SNOPT \cite{Gill2002}.

Among recent SCP algorithms, \texttt{SCvx} \cite{Mao2019a,Mao2016} (and its variation, \texttt{SCvx-fast} \cite{Mao2021}) and guaranteed sequential trajectory optimization (\texttt{GuSTO}) \cite{Bonalli2019b,Bonalli2022a} are arguably the most notable ones due to their rigorous theoretical underpinnings, with successful application to various problems \cite{Szmuk2020,Bonalli2019a}.
See \cite{Malyuta2022} for a comprehensive review. 
In particular, \cite{Mao2019a} (\texttt{SCvx}) and  \cite{Bonalli2022a} (\texttt{GuSTO}) analyze the performance of these algorithms in depth, and provide theoretical guarantees on their powerful capabilities, through different approaches: the Karush–Kuhn–Tucker (KKT) conditions for \texttt{SCvx} while Pontryagin's minimum principle for \texttt{GuSTO}.
On the other hand, like any algorithms, each algorithm has their own limitations, including:
the convergence guarantee to a KKT point of the penalty problem but not of the original problem (i.e., lack of feasibility guarantee) in \texttt{SCvx} \cite{Mao2019a};
the requirement on the dynamical systems to be control-affine in \texttt{GuSTO} \cite{Bonalli2022a}.
Note that \texttt{SCvx} achieves a solution to the original problem for a ``sufficiently large'' penalty weight \cite{Mao2019a}, which the user needs to find through trials and errors.


The objective of this paper is to fill the gap in those theoretical aspects of the existing SCP algorithms by building on the algorithmic foundation laid by \texttt{SCvx}.
This study proposes a new SCP algorithm that guarantees feasibility to the \textit{original} problem while retaining \texttt{SCvx}'s favorable properties, including the minimal requirements on the dynamical system.
The main idea is to integrate the \texttt{SCvx} iteration into an algorithmic framework of the {augmented Lagrangian (AL) method} \cite{Bertsekas1982}.
The AL method is a nonlinear programming technique proposed by Hestenes \cite{Hestenes1969} and Powell \cite{Powell1978} in 1960s to iteratively improve the multiplier estimate, addressing drawbacks of the quadratic penalty method \cite{Bertsekas1982}.

The proposed algorithm iterates on the original optimization variables \textit{and} the multipliers of the associated Lagrangian function in the primal-dual formalism.
This facilitates the feasibility guarantee as well as efficient convergence, in a spirit similar to the alternating direction method of multipliers (ADMM) \cite{Boyd2011}.
The proposed algorithm is named \texttt{SCvx*}, as it inherits key properties of \texttt{SCvx} and augments it by the feasibility guarantee, represented by ``\texttt{*}''.
A preliminary version of \texttt{SCvx*} has been successfully applied to space trajectory optimization under uncertainty \cite{Oguri2022f}, although the present paper is the first to provide comprehensive, rigorous convergence analysis of \texttt{SCvx*}.

The main contributions of \texttt{SCvx*} are threefold.
Under the Assumption 1 in \cref{sec:problemStatement}, the proposed algorithm
\begin{enumerate}
\item 
provides the convergence guarantee to a \textit{feasible} local optimum of the original problem, eliminating the need of trials and errors for tuning the penalty weight;
\item 
provides the strong global convergence to a single local solution of the original problem with minimal requirements on the problem structure; and
\item 
provides the linear/superlinear convergence rate of Lagrange multipliers with a slight algorithm modification.
\end{enumerate}


\section{Preliminary}
\label{sec:preliminary}

\subsection{Problem Statement}
\label{sec:problemStatement}

We consider solving discrete-time non-convex optimal control problems given by \cref{prob:OCP}, where
$x_s \in \R^{n_x} $ and $u_s \in\R^{n_u} $ represent the state and control at $s$-th time instance ($n_x,n_u\in\N$).
$g_{\mathrm{affine}}(\cdot) $ and $g_i(\cdot)$ represent the affine and non-affine equality constraint functions while $h_{\mathrm{cvx}}(\cdot) $ and $h_j(\cdot)$ the convex and non-convex inequality constraint functions.
$N, p, q\in\N $ are the number of discrete time steps, equality constraints, and inequality constraints, respectively.
$f_0(\cdot) $ is assumed to be strictly convex and continuously differentiable in $x_s, u_s\ \forall s$, without loss of generality.\footnote{If the original objective does not satisfy the assumption, one can introduce a new variable (say $\tau$) and turning the non-convex objective into a non-convex inequality constraint bounded above by $\tau$.}

\begin{problem}[Non-convex Optimal Control Problem]
\label{prob:OCP}
\begin{align}
\begin{aligned}
& \underset{x,u}{\min}
& & f_0(x,u) \\
&\ \mathrm{s.t.}
& &     x_{s+1} = f_s(x_s, u_s),    \quad s=1,2,...,N-1,\\
& & &   g_i(x,u) = 0 ,    \quad i=1,2,...,p,\\
& & &   h_j(x,u) \leq 0 , \quad j=1,2,...,q,\\
& & &   g_{\mathrm{affine}}(x,u) = 0,\quad h_{\mathrm{cvx}}(x,u) \leq 0
\end{aligned}\nonumber
\end{align}
where
$x = [x_0^\top , x_1^\top, ..., x_N^\top]^\top $,
$u = [u_0^\top , u_1^\top, ..., u_{N-1}^\top]^\top $.
\end{problem}

Defining $z = [x^\top, u^\top]^\top \in\R^{n_z} $, where
$n_z=n_x(N+1) + n_uN$,
\cref{prob:OCP} can be cast as a non-convex optimization problem given in \cref{prob:NLP}, where
$g_i(\cdot) $ incorporates the dynamical constraints $x_{s+1} = f_s(x_s, u_s)$.

\begin{problem}[Non-convex Optimization Problem]
\label{prob:NLP}
\begin{align}
\begin{aligned}
& \underset{z}{\min}
& & f_0(z) \\
&\ \mathrm{s.t.}
& &     g_i(z) = 0 ,    \quad i=1,2,...,p+Nn_x,\\
& & &   h_j(z) \leq 0 , \quad j=1,2,...,q, \\
& & &   g_{\mathrm{affine}}(z) = 0,\quad h_{\mathrm{cvx}}(z) \leq 0
\end{aligned}\nonumber
\end{align}
\end{problem}

Define $g_{\mathrm{all}}(\cdot)$ and $h_{\mathrm{all}}(\cdot)$ to represent all the equality and inequality constraints in \cref{prob:NLP} as:
$g_{\mathrm{all}}(z) = [g(z)^\top, g_{\mathrm{affine}}(z)^\top]^\top $ and
$h_{\mathrm{all}}(z) = [h(z)^\top, h_{\mathrm{cvx}}(z)^\top]^\top $,
where $g = [g_1, ..., g_{p+N n_x}]^\top $ and $h = [h_1, ..., h_q]^\top $ are vectorized non-affine and non-convex constraint functions.

Relevant conditions for a local minimum of \cref{prob:NLP} are provided in \cref{theorem:FONC} 
and \cref{theorem:SOSC}.
These Theorems are obtained by introducing the Lagrangian function $\mathcal{L}(\cdot) $ with Lagrange multiplier vectors $\lambda$ and $\mu $ of appropriate size as:
\begin{align}
\mathcal{L}(z, \lambda, \mu) = 
f_0(z) + \lambda \cdot g_{\mathrm{all}}(z) + \mu \cdot h_{\mathrm{all}}(z)
\label{eq:LagrangianFunction}
\end{align}
and then applying Theorem 12.1 of \cite{Nocedal2006} and Theorem 12.6 of \cite{Nocedal2006} to \cref{prob:NLP}, respectively.
$(\cdot)$ is the dot product operator.

\begin{theorem}[First-order Necessary Conditions]
\label{theorem:FONC}
Suppose that $z_* $ solves \cref{prob:NLP} and that the linear independent constraint qualification (LICQ) holds at $z_* $.
Then there exist Lagrange multiplier vectors $\lambda_*$ and $\mu_* $ such that the KKT conditions are satisfied, i.e.,
$\nabla_z \mathcal{L}(z_*, \lambda_*, \mu_*) = 0 $,
$[g_\mathrm{all}(z_*)]_i = 0 $,
$[h_\mathrm{all}(z_*)]_j \leq 0 $,
$[\mu_*]_j \geq 0 $, and
$[\mu_*]_j [h_\mathrm{all}(z_*)]_j = 0,\ \forall i,j $.
\end{theorem}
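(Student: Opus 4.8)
The plan is to observe that \cref{theorem:FONC} is nothing more than the specialization of the classical first-order necessary (KKT) conditions — Theorem 12.1 of \cite{Nocedal2006} — to the concrete problem \cref{prob:NLP}; accordingly, the proof reduces to (i) casting \cref{prob:NLP} in the standard smooth nonlinear-programming form and (ii) checking that the hypotheses of that theorem are in force, after which its conclusion can be quoted verbatim.

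First I would collect the constraints exactly as already done in the text: all equality constraints are stacked into the single map $g_{\mathrm{all}}(z) = [g(z)^\top, g_{\mathrm{affine}}(z)^\top]^\top$ — note that the discrete dynamics $x_{s+1}=f_s(x_s,u_s)$ have, by construction, already been folded into $g$ — and all inequality constraints into $h_{\mathrm{all}}(z) = [h(z)^\top, h_{\mathrm{cvx}}(z)^\top]^\top$. Under the standing assumptions (Assumption 1), together with the hypothesis that $f_0$ is continuously differentiable, the maps $f_0$, $g_{\mathrm{all}}$, $h_{\mathrm{all}}$ are continuously differentiable on a neighborhood of $z_*$ — stacking $C^1$ maps yields a $C^1$ map, and the affine/convex data are covered by the assumptions — so \cref{prob:NLP} is precisely a smooth NLP of the type to which Theorem 12.1 of \cite{Nocedal2006} applies.

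Then I would invoke that theorem directly. Since $z_*$ is a local minimizer of \cref{prob:NLP} and LICQ holds at $z_*$ (the gradients of the active inequality constraints together with the gradients of all equality constraints are linearly independent at $z_*$), Theorem 12.1 of \cite{Nocedal2006} guarantees the existence of multiplier vectors $\lambda_*$ and $\mu_*$ of appropriate size such that, with $\mathcal{L}$ the Lagrangian defined in \cref{eq:LagrangianFunction}, the KKT system holds: stationarity $\nabla_z\mathcal{L}(z_*,\lambda_*,\mu_*)=0$, primal feasibility $[g_{\mathrm{all}}(z_*)]_i=0$ and $[h_{\mathrm{all}}(z_*)]_j\leq 0$, dual feasibility $[\mu_*]_j\geq 0$, and complementary slackness $[\mu_*]_j[h_{\mathrm{all}}(z_*)]_j=0$ for all $i,j$. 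Identifying \cref{eq:LagrangianFunction} with the Lagrangian used in the cited theorem closes the argument.

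The only substantive point to verify — and hence the (mild) main obstacle — is the smoothness/regularity bookkeeping: one must confirm that the reformulation into $g_{\mathrm{all}},h_{\mathrm{all}}$ preserves continuous differentiability (immediate) and, in particular, that the convex inequality data $h_{\mathrm{cvx}}$ is differentiable rather than merely convex, which is exactly what the standing assumptions supply; LICQ is assumed outright, so no constraint qualification needs to be established here. Beyond this, the statement is a direct quotation of the cited theorem and requires no further estimates.
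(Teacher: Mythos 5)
Your proposal matches the paper's own argument: the paper establishes \cref{theorem:FONC} precisely by introducing the Lagrangian \cref{eq:LagrangianFunction} and applying Theorem 12.1 of \cite{Nocedal2006} to \cref{prob:NLP}, which is exactly the reduction you carry out. Your additional bookkeeping on stacking the constraints into $g_{\mathrm{all}}, h_{\mathrm{all}}$ and verifying smoothness is consistent with (and slightly more explicit than) the paper's one-line justification.
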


\begin{theorem}[Second-order Sufficient Conditions]
\label{theorem:SOSC}
Suppose that for some feasible point $z_* $ of \cref{prob:NLP}, there exist $\lambda_*$ and $\mu_* $ that satisfy the KKT conditions given in \cref{theorem:FONC} and that $\nabla_{zz}^2 \mathcal{L}(z_*, \lambda_*, \mu_*)$ is positive definite on the plane tangent to the constraints, i.e., $v^\top \nabla_{zz}^2 \mathcal{L}(z_*, \lambda_*, \mu_*) v > 0,\ \forall v \in \{v\neq 0 \mid v^\top \nabla_z g_{\mathrm{all}}(z_*) =0 , v^\top \nabla_z h_{\mathrm{active}}(z_*) =0\} $, where $h_{\mathrm{active}} $ denotes the active inequality constraint vector.
Then $z_* $ is a strict local solution for \cref{prob:NLP}.
\end{theorem}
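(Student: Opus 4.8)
The statement is the instantiation to \cref{prob:NLP} of the classical second-order sufficiency theorem for nonlinear programs (Theorem 12.6 of \cite{Nocedal2006}): $\mathcal{L}$ in \cref{eq:LagrangianFunction} is exactly the Lagrangian of \cref{prob:NLP}, and the two hypotheses needed there---existence of a KKT triple $(z_*,\lambda_*,\mu_*)$, and positive definiteness of $\nabla_{zz}^2\mathcal{L}$ on the critical subspace at $z_*$---are precisely what \cref{theorem:SOSC} assumes. The shortest route is therefore to verify this correspondence and invoke that result. For a self-contained argument I would reproduce the standard proof, which proceeds by contradiction.

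Suppose $z_*$ is not a strict local solution of \cref{prob:NLP}. Then there is a sequence of feasible points $z_k\to z_*$ with $z_k\neq z_*$ and $f_0(z_k)\le f_0(z_*)$. Write $z_k=z_*+t_k d_k$ with $t_k=\norm{z_k-z_*}\to 0^+$ and $\norm{d_k}=1$, and, passing to a subsequence, let $d_k\to d$ with $\norm{d}=1$. The first step is to place $d$ in the set on which $\nabla_{zz}^2\mathcal{L}$ is assumed positive definite. First-order Taylor expansion of the constraints along the sequence, using $g_{\mathrm{all}}(z_k)=g_{\mathrm{all}}(z_*)=0$ and $h_{\mathrm{active}}(z_k)\le 0=h_{\mathrm{active}}(z_*)$, gives $\nabla_z g_{\mathrm{all}}(z_*)^\top d=0$ and $\nabla_z h_{\mathrm{active}}(z_*)^\top d\le 0$; likewise $f_0(z_k)\le f_0(z_*)$ gives $\nabla_z f_0(z_*)^\top d\le 0$. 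Combining these with the stationarity condition $\nabla_z\mathcal{L}(z_*,\lambda_*,\mu_*)=0$, the sign condition $\mu_*\ge 0$, and complementary slackness, a squeezing argument forces $\nabla_z f_0(z_*)^\top d=0$ together with $[\mu_*]_j\,\nabla_z h_j(z_*)^\top d=0$ for every $j$; in particular $\nabla_z h_j(z_*)^\top d=0$ whenever $[\mu_*]_j>0$. Hence $d$ lies in the critical cone at $z_*$---the set on which Theorem 12.6 of \cite{Nocedal2006} imposes positive definiteness, and which coincides with the tangent set of \cref{theorem:SOSC} under strict complementarity.

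The second step is a second-order expansion of the Lagrangian along $z_k$. On one hand, feasibility of $z_k$ together with $g_{\mathrm{all}}(z_k)=0$, $\mu_*\ge 0$, $h_{\mathrm{all}}(z_k)\le 0$, and complementary slackness at $z_*$ gives
\[ \mathcal{L}(z_k,\lambda_*,\mu_*)=f_0(z_k)+\mu_*\cdot h_{\mathrm{all}}(z_k)\le f_0(z_k)\le f_0(z_*)=\mathcal{L}(z_*,\lambda_*,\mu_*). \]
On the other hand, Taylor's theorem (valid because the problem data are smooth, as assumed in \cref{sec:problemStatement}) together with $\nabla_z\mathcal{L}(z_*,\lambda_*,\mu_*)=0$ gives
\[ \mathcal{L}(z_k,\lambda_*,\mu_*)-\mathcal{L}(z_*,\lambda_*,\mu_*)=\tfrac{1}{2}\,t_k^2\,d_k^\top\nabla_{zz}^2\mathcal{L}(z_*,\lambda_*,\mu_*)\,d_k+o(t_k^2). \]
Dividing by $t_k^2$ and letting $k\to\infty$ yields $d^\top\nabla_{zz}^2\mathcal{L}(z_*,\lambda_*,\mu_*)\,d\le 0$ with $d\neq 0$ in the critical cone, contradicting the assumed positive definiteness. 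Therefore $z_*$ is a strict local solution of \cref{prob:NLP}.

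The only real obstacle is the first step: certifying that the limiting direction $d$ genuinely lies in the set over which $\nabla_{zz}^2\mathcal{L}$ is assumed positive definite. This is where the KKT sign condition $\mu_*\ge 0$ and complementary slackness do the work, and where care is needed with active inequality constraints carrying zero multipliers---for which the precise object is the critical cone rather than the full tangent subspace. The remaining steps---the Taylor estimates and the passage to the limit---are routine given that smoothness.
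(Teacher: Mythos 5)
Your proposal matches the paper's own proof, which is simply to apply Theorem 12.6 of \cite{Nocedal2006} to \cref{prob:NLP} with the Lagrangian \cref{eq:LagrangianFunction}; your self-contained contradiction argument is the standard proof of that theorem and is correct. You also rightly flag the one genuine subtlety---the hypothesis in \cref{theorem:SOSC} is stated on the tangent subspace rather than on the critical cone, so it coincides with Theorem 12.6 only under strict complementarity---but that is a feature of the statement itself, not a gap in your argument.
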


\noindent
\textbf{Assumption 1.}
A local solution $z_*$ of \cref{prob:NLP} together with a unique set of multiplier vectors $\lambda_* $ and $\mu_*$ satisfies the standard second-order sufficient condition for constrained optimization given in \cref{theorem:SOSC}.
Around $z_*$, $g_{\mathrm{all}}(\cdot)$ and $h_{\mathrm{all}}(\cdot) $ are continuously differentiable and satisfy LICQ.

\subsection{Augmented Lagrangian Method}
\label{sec:augLag}

The augmented Lagrangian method \cite{Bertsekas1982} augments the Lagrangian function \cref{eq:LagrangianFunction} as:
\begin{align}
\begin{aligned}
\mathcal{L}_w(z, \lambda, \mu) = 
\mathcal{L}(\cdot) + 
\frac{w}{2} g_{\mathrm{all}} \cdot g_{\mathrm{all}}
+ 
\frac{w}{2} [h_{\mathrm{all}}]_+ \cdot [h_{\mathrm{all}}]_+
\end{aligned}
\label{eq:augLagrangian}
\end{align}
where
$w\in\R$, and
$[x]_+ = \max\{0, x\} $, working element-wise if $x$ is a vector.
This work takes advantage of the property of the augmented Lagrangian method that guarantees the convergence of the variable $z$ and multipliers $\lambda,\mu $ to the optimum, $z_*,\lambda_*, \mu_* $, even if the minimization of $\mathcal{L}_w(\cdot)$ is inexact at each iteration, provided that a few assumptions are met.
\cref{lemma:ALconvergence} gives a summary of this favorable property and the required assumptions in a form tailored to \cref{prob:NLP}.
The superscript $(k)$ denotes the quantities at $k$-th iteration.

\begin{lemma}[Augmented Lagrangian Convergence with Inexact Minimization]
\label{lemma:ALconvergence}
Suppose that for \cref{prob:NLP}, a sequence of $\{{z}_*^{(k)} \} $ satisfies $\norm{\nabla_z \mathcal{L}_{w^{(k)}}({z}_*^{(k)}, \lambda^{(k)}, \mu^{(k)})}_2 \leq \delta^{(k)} $ where ${\delta^{(k)}} \to 0 $ and $\{\lambda^{(k)}, \mu^{(k)}, w^{(k)} \}$ are updated as:
\begin{subequations}
\begin{align}
\lambda^{(k+1)} 	&= \lambda^{(k)} + w^{(k)}g_{\mathrm{all}}({z}_*^{(k)}),\\
\mu^{(k+1)} 		&= [\mu^{(k)} + w^{(k)}h_{\mathrm{all}}({z}_*^{(k)})]_{+}, \\
w^{(k+1)} 			&= \beta w^{(k)} \quad (\beta > 1)
\label{eq:penaltyUpdate}
\end{align}
\label{eq:ALupdate}%
\end{subequations}
where $\{\lambda^{(k)}, \mu^{(k)} \} $ are bounded.
Then, $w^{(k)}$ eventually exceeds a threshold $w_*$ that gives $\nabla_{zz}^2 \mathcal{L}_{w_*} ({z}_*^{(k)}, \lambda^{(k)}, \mu^{(k)}) \succ 0 $, and any sequence $\{{z}_*^{(k)}, \lambda^{(k)}, \mu^{(k)} \}$ globally converges to a local optimum of \cref{prob:NLP}, $\{z_*, \lambda_*, \mu_* \} $.
\end{lemma}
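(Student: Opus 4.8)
The plan is to recognize \cref{lemma:ALconvergence} as a specialization to \cref{prob:NLP} of the classical convergence theory of the method of multipliers with an unbounded penalty sequence and \emph{inexact} inner minimization \cite{Bertsekas1982}, and to check that Assumption 1 together with the stated hypotheses supplies exactly what that theory requires. I would build the argument on three facts: (i) a finite penalty threshold $w_*$ above which $\nabla_{zz}^2\mathcal{L}_w(\cdot)$ is uniformly positive definite near $z_*$; (ii) a bound turning the stationarity tolerance $\delta^{(k)}$ into a distance bound between $z_*^{(k)}$ and the exact inner minimizer; and (iii) a contraction estimate for the multiplier recursion \cref{eq:ALupdate}.

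\emph{Step 1 (threshold $w_*$).} By Assumption 1, $(z_*,\lambda_*,\mu_*)$ satisfies the conditions of \cref{theorem:SOSC} and LICQ. The plus-penalty $[h_{\mathrm{all}}]_+\cdot[h_{\mathrm{all}}]_+$ is continuously differentiable, so $\mathcal{L}_w(\cdot,\lambda,\mu)$ is $C^1$ and, away from the kink, $C^2$ with $\nabla_{zz}^2\mathcal{L}_w=\nabla_{zz}^2\mathcal{L}+w\,A^\top A$, where the rows of $A$ are $\nabla_z g_{\mathrm{all}}$ and the gradients of the active rows of $h_{\mathrm{all}}$. Since $\nabla_{zz}^2\mathcal{L}$ is positive definite on $\ker A$ by \cref{theorem:SOSC}, a Debreu-type compactness argument yields a finite $w_*$ with $\nabla_{zz}^2\mathcal{L}_w\succeq m I$ for all $w\geq w_*$ on a neighborhood $\mathcal{N}$ of $(z_*,\lambda_*,\mu_*)$, with $m>0$ independent of $w$ (the term $w\,A^\top A$ only adds curvature in constraint-normal directions). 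As \cref{eq:penaltyUpdate} gives $w^{(k)}=\beta^k w^{(0)}\to\infty$, some $K$ has $w^{(k)}>w_*$ for all $k\geq K$ — the first assertion. For such $k$ the inner problem has a unique minimizer $\hat z^{(k)}$ in $\mathcal{N}$, smooth in $(\lambda^{(k)},\mu^{(k)},w^{(k)})$ by the implicit function theorem, with $\hat z^{(k)}\to z_*$ as $(\lambda^{(k)},\mu^{(k)})\to(\lambda_*,\mu_*)$, $w^{(k)}\to\infty$.

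\emph{Steps 2--3 (inexactness and contraction).} Strong monotonicity of $z\mapsto\nabla_z\mathcal{L}_{w^{(k)}}(z,\lambda^{(k)},\mu^{(k)})$ with modulus $m$ on $\mathcal{N}$, combined with $\nabla_z\mathcal{L}_{w^{(k)}}(\hat z^{(k)},\cdot)=0$, gives $\norm{z_*^{(k)}-\hat z^{(k)}}_2\leq\delta^{(k)}/m$; resolving this bound along the eigenspaces of $A^\top A$ further shows the constraint-normal component of $z_*^{(k)}-\hat z^{(k)}$ is only $O(\delta^{(k)}/w^{(k)})$. Writing $e^{(k)}:=(\lambda^{(k)}-\lambda_*,\ \mu^{(k)}-\mu_*)$ and using the classical estimate $\norm{e^{(k+1)}}\leq (M/w^{(k)})\norm{e^{(k)}}$ for the \emph{exact} multiplier iteration near $(\lambda_*,\mu_*)$ — valid by LICQ and uniqueness of the multipliers, with the active inequalities treated as equalities once the active set has settled — I would substitute the inexact point into \cref{eq:ALupdate} to obtain a perturbed recursion $\norm{e^{(k+1)}}\leq (M/w^{(k)})\norm{e^{(k)}}+\gamma^{(k)}$, where $\gamma^{(k)}$ collects the inexactness contribution after the $w^{(k)}$ factor in the update is absorbed by the $O(\delta^{(k)}/w^{(k)})$ normal-component bound. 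Since $M/w^{(k)}\to 0$ and $\gamma^{(k)}\to 0$, a routine scalar-sequence argument — using boundedness of $\{\lambda^{(k)},\mu^{(k)}\}$ to keep iterates inside $\mathcal{N}$ — forces $e^{(k)}\to 0$; then $\hat z^{(k)}\to z_*$ from Step 1 and $\norm{z_*^{(k)}-\hat z^{(k)}}_2\leq\delta^{(k)}/m\to 0$ give $z_*^{(k)}\to z_*$, so the entire sequence $\{z_*^{(k)},\lambda^{(k)},\mu^{(k)}\}$ converges to $\{z_*,\lambda_*,\mu_*\}$.

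\emph{Main obstacle.} The crux is the bookkeeping in Steps 2--3: because $w^{(k)}\to\infty$ while only $\delta^{(k)}\to 0$ is assumed, the term $w^{(k)}g_{\mathrm{all}}(z_*^{(k)})$ in the multiplier update is not obviously bounded, and controlling it forces one to exploit the anisotropy of $\nabla_{zz}^2\mathcal{L}_{w^{(k)}}$ (curvature $\Theta(w^{(k)})$ along constraint-normal directions); depending on how the second-order terms enter, making $\gamma^{(k)}\to 0$ rigorous may require $\delta^{(k)}$ to decay fast enough relative to $w^{(k)}$. A secondary technicality is the non-$C^2$ kink of the plus-penalty: one must show the active set stabilizes near $z_*$ (from LICQ and continuity of $\hat z^{(k)}$) so that the smooth equality-constrained analysis of \cite{Bertsekas1982} applies, ideally without assuming strict complementarity.
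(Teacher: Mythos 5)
Your proposal follows essentially the same route as the paper: the paper's proof is a one-line citation of Propositions 2.14, 3.1, and 3.2 of Bertsekas (1982) (with Bertsekas (1976) for the global-convergence statement) under Assumption 1, and your three steps --- the finite threshold $w_*$ from the second-order sufficient conditions via a Debreu-type argument, the $\delta^{(k)}/m$ inexactness bound, and the perturbed multiplier contraction $\norm{e^{(k+1)}}\leq (M/w^{(k)})\norm{e^{(k)}}+\gamma^{(k)}$ --- are precisely the content of those cited propositions. The two obstacles you flag are resolved there as well: with the stopping rule $\norm{\nabla_z\mathcal{L}_{w^{(k)}}}\leq\delta^{(k)}$ the classical estimate already yields $\gamma^{(k)}=O(\delta^{(k)})\to 0$ with no required decay rate of $\delta^{(k)}$ relative to $w^{(k)}$, and the kink of the plus-penalty is handled by the squared-slack-variable reduction of inequalities to equalities (Propositions 3.1--3.2), without strict complementarity.
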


\begin{proof}
The proof is by applying Propositions 2.14, 3.1, and 3.2 of \cite{Bertsekas1982} to \cref{prob:NLP} (see \cite{Bertsekas1976} for an explicit discussion about the global convergence) under Assumption 1.
\end{proof}

Noting that ${z}_*^{(k)}$ represents an approximate minimizer of $\mathcal{L}_{w^{(k)}}(\, \cdot\, , \lambda^{(k)}, \mu^{(k)})$, \cref{lemma:ALconvergence} clarifies that inexact minimization at each augmented Lagrangian iteration must be asymptotically exact, i.e., $\norm{\nabla_z \mathcal{L}_w(\cdot)}_2 \to 0 $ as $k\to \infty$.


\section{The Proposed Algorithm: \texttt{SCvx*}}
\label{sec:algorithm}
This section presents the proposed algorithm, \texttt{SCvx*}.
The convergence analysis of \texttt{SCvx*} is given in \cref{sec:convergenceAnalysis}.

\subsection{Non-convex Penalty Problem with Augmented Lagrangian}

While the augmented Lagrangian function \cref{eq:augLagrangian} is introduced from the viewpoint of primal-dual formalism, it can be also viewed from a penalty method standpoint.
Adopting this viewpoint, \cref{eq:augLagrangian} can be equivalently expressed as 
$\mathcal{L}_w(z, \lambda, \mu) = f_0(z) + P(g_{\mathrm{all}}(z), h_{\mathrm{all}}(z), w, \lambda , \mu )$, where $P(g,h,w,\lambda,\mu)$ denotes the penalty function defined as:
\begin{align}
\begin{aligned}
&P(\cdot)
=
\lambda \cdot g +\frac{w}{2} g \cdot g
+
\mu \cdot [h]_+ + \frac{w}{2} [h]_+\cdot[h]_+
\end{aligned}
\label{eq:penaltyAL}
\end{align}

With the penalty function of the form given by \cref{eq:penaltyAL}, we now formulate our non-convex penalty problem based on \cref{prob:NLP}.
As our algorithm is based on SCP, our penalty problem penalizes the violations of non-convex constraints only ($\because$ convex constraints are imposed in each convex programming);
hence, redefine the Lagrange multipliers as 
\begin{align}
\lambda = [\lambda_1, \lambda_2, ..., \lambda_{p+Nn_x}]^\top,\ 
\mu = [\mu_1, \mu_2, ..., \mu_q]^\top \geq 0 .
\end{align}
This leads to our non-convex penalty problem, \cref{prob:NLP_p}.

\begin{problem}[Non-convex penalty problem with AL]
\label{prob:NLP_p}
\begin{align}
\begin{aligned}
&\underset{z}{\min}
&& 
J(z)  =
f_0(z) +
P(g(z), h(z), w,\lambda,\mu)
\\ 
&\quad \mathrm{s.t.}
&& 	g_{\mathrm{affine}}(z) = 0,\ 
	h_{\mathrm{cvx}}(z) \leq 0
\end{aligned}\nonumber
\end{align}
where $g = [g_1, g_2, ..., g_{p+N n_x}]^\top $ and $h = [h_1, h_2, ..., h_q]^\top $.
\end{problem}

\subsection{Convex Penalty Problem with Augmented Lagrangian}
\label{sec:ALconvexPenaltyProblem}

\cref{prob:NLP_p} is clearly non-convex due to the nonlinearity and non-convexity of $g(\cdot) $ and $h(\cdot) $.
To solve the problem via SCP, we linearize them about a reference variable $\bar{z} $ at each iteration, which yields
$\widetilde{g} = 0 $ and
$\widetilde{h} \leq 0 $, where
\begin{align}
\begin{aligned}
\widetilde{g}(z) &= g(\bar{z}) + \nabla_z g({\bar{z}}) \cdot (z - \bar{z}) ,
\\
\widetilde{h}(z) &= h(\bar{z}) + \nabla_z h({\bar{z}}) \cdot (z - \bar{z}) 
\end{aligned}
\label{eq:linearization}
\end{align}
However, imposing $\widetilde{g} = 0 $ and $\widetilde{h} \leq 0 $ in the convex subproblem can lead to the issue of \textit{artificial infeasibility} \cite{Mao2016}, and hence these linearized constraints are relaxed as follows:
\begin{align}
\begin{aligned}
\widetilde{g}(z) &= \xi
,\quad
\widetilde{h}(z) \leq \zeta
\end{aligned}
\label{eq:linearConstraints}
\end{align}
where $\xi \in\R^{p+Nn_x} $, $\zeta \in\R^q$, and $\zeta \geq 0 $.
Although the original \texttt{SCvx} literature \cite{Mao2016,Mao2019a} introduces \textit{virtual control} and \textit{virtual buffer} terms separately, the former is naturally incorporated in $\xi$.
It is easy to verify this; 
noting that linearized dynamical constraints are given by
\begin{align}
\begin{aligned}
x_{s+1} = A_s x_s + B_s u_s + c_s + E_s \xi,
\quad s=1,2,...,N
\end{aligned}
\label{eq:linearDynamics}
\end{align}
where $E_s\in\R^{n_x\times p + N n_x} $ extracts the virtual control term at $s$-th time instance from $\xi$, and
\begin{align}
\begin{aligned}
A_s =& \nabla_x f_s({\bar{x}_s, \bar{u}_s})
,\ 
B_s = \nabla_u f_s({\bar{x}_s, \bar{u}_s})
,\\ 
c_s =& f_s(\bar{x}_s, \bar{u}_s) - A_s \bar{x}_s - B_s \bar{u}_s
\end{aligned}
\end{align}
it is clear that \cref{eq:linearDynamics} can be incorporated into $\widetilde{g}(z) = \xi$.

On the other hand, the linearization and constraint relaxation lead to another issue called \textit{artificial unboundedness} \cite{Mao2019a}.
To avoid this, we impose a constraint on the variable update magnitude with a trust region bound $r>0$, given by
\begin{align}
\norm{\bar{z} - z}_{\infty} \leq r
\label{eq:trustRegion}
\end{align}
The trust region method is common in many algorithms for nonlinear programming \cite{Nocedal2006}.
This prevents the optimizer from exploring the solution space ``too far'' from $\bar{z}$.

\cref{prob:CVX_p} gives the convex subproblem at each iteration.
\cref{prob:CVX_p} is convex in $z,\xi,\zeta$ since $[x]_+^2 = (\max\{0, x\})^2 $, which appears in \cref{eq:penaltyAL}, is convex in $x\in\R$.

\begin{problem}[Convex penalty subproblem with AL]
\label{prob:CVX_p}
\begin{align}
\begin{aligned}
&\underset{z,\xi,\zeta}{\min}
&& 
L (z,\xi,\zeta) = 
f_0(z)
+
P (\xi,\zeta, w, \lambda, \mu)
\\ 
&\mathrm{s.t.}
\ 
&& 	\widetilde{g}(z) = \xi,\ 
	\widetilde{h}(z) \leq \zeta,\
	\zeta \geq 0,\\ 
&&& \norm{\bar{z} - z}_{\infty} \leq r,\ 
	g_{\mathrm{affine}}(z) = 0,\ 
	h_{\mathrm{cvx}}(z) \leq 0
\end{aligned}\nonumber
\end{align}
\end{problem}

\subsection{\texttt{SCvx*} Algorithm}

We are now ready to present the proposed \texttt{SCvx*} algorithm.
\cref{alg:SCvx*} summarizes \texttt{SCvx*}.
The key steps of \cref{alg:SCvx*} are discussed in the rest of this section.

\begin{algorithm}[tb]
\caption{\texttt{SCvx*}}
\label{alg:SCvx*}
\textbf{Input}: $\bar{z}^{(1)}, r^{(1)}, w^{(1)}, \epsilon_{\mathrm{opt}}, \epsilon_{\mathrm{feas}},
\rho_0, \rho_1, \rho_2, \alpha_1,\alpha_2,\beta,\gamma$
\begin{algorithmic}[1]
\State 
$k= 1$,
$\Delta J^{(0)} = \chi^{(0)} = \delta^{(1)} = \infty $,
$\lambda^{(1)} = \mu^{(1)} = 0 $

\While {$\abs{\Delta J^{(k-1)}} > \epsilon_{\mathrm{opt}} $ or $\chi^{(k-1)} > \epsilon_{\mathrm{feas}} $}
\label{line:convergenceCriterion}
\State $\{\widetilde{g}^{(k)}, \widetilde{h}^{(k)}\} \gets $ derived via \cref{eq:linearization} at $\bar{z}^{(k)} $
\State 
	$\{z^{(k)}_*, \xi^{(k)}_*, \zeta^{(k)}_*\} \gets  $ solve \cref{prob:CVX_p}
\State
	$\{\Delta J^{(k)}, \Delta L^{(k)}, \chi^{(k)}\} \gets $ \cref{eq:optimalityFeasibility}

\If{$\Delta L^{(k)}=0$}
\State $\rho^{(k)} \gets 1$
\Else
\State $\rho^{(k)} \gets \Delta J^{(k)} / \Delta L^{(k)} $
\EndIf

\State
$\{\bar{z}, w, \lambda, \mu, \delta \}^{(k+1)} \gets  \{\bar{z}, w, \lambda, \mu , \delta\}^{(k)} $
\Comment{default}

\If{$\rho^{(k)} \geq \rho_0 $} \Comment{accept the step}
\label{line:iterationAcceptance}
\State 
$\bar{z}^{(k+1)}\gets z^{(k)}_* $
\Comment{solution update}

\If {\cref{eq:LagUpdateCond} is satisfied}
\label{line:ALupdateCriterion}
\State
$\{\lambda, \mu, w \}^{(k+1)} \gets $ \cref{eq:ALupdate}
\Comment{\small{multiplier update}}
\State
$\delta^{(k+1)} \gets $  \cref{eq:stationaryConditionUpdate}
\label{line:KKTupdate}
\Comment{\small{stationarity tol. update}}
\EndIf

\EndIf

\State
$r^{(k+1)}\gets $ \cref{eq:TRupdate} \Comment{trust region update}

\State $k\gets k+1$
\EndWhile
\State \Return $(z^{(k)}_*, \lambda^{(k)}, \mu^{(k)})$
\end{algorithmic}
\end{algorithm}

\subsubsection{Successive linearization}

Let us compactly express the penalty function \cref{eq:penaltyAL} at $k$-th iteration as:
\begin{align}
\begin{aligned}
P^{(k)}(g, h) &\triangleq
P(g, h, w^{(k)},\lambda^{(k)},\mu^{(k)})
\end{aligned}
\end{align}
Likewise, the penalized objectives of \cref{prob:NLP_p,prob:CVX_p} at $k$-th iteration are expressed as:
\begin{align}
\begin{aligned}
J^{(k)} (z)  &\triangleq
f_0(z) +
P^{(k)}(g(z), h(z))
\\
L^{(k)} (z, \xi, \zeta)  &\triangleq
f_0(z) +
P^{(k)}(\xi, \zeta)
\end{aligned}
\end{align}

Given a user-provided initial reference point $\bar{z}^{(1)} $, the linearization process follows \cref{sec:ALconvexPenaltyProblem}, which instantiates \cref{prob:CVX_p} at each iteration.
\cref{prob:CVX_p} is solved to convergence, yielding the solution at $k$-th iteration, $z^{(k)}_*,\xi^{(k)}_*,\zeta^{(k)}_*$.

Every time after \cref{prob:CVX_p} is solved, \texttt{SCvx*} calculates:
\begin{subequations}
\begin{align}
\Delta J^{(k)} &= J^{(k)}(\bar{z}^{(k)}) - J^{(k)}(z^{(k)}_*) 
\label{eq:actualReduction}
\\
\Delta L^{(k)} &= J^{(k)} (\bar{z}^{(k)}) - L^{(k)} (z^{(k)}_*,\xi^{(k)}_*,\zeta^{(k)}_*)
\label{eq:predictedReduction}
\\
\chi^{(k)} &= \norm{g(z^{(k)}_*), [h(z^{(k)}_*)]_{+}}_{2}
\label{eq:infeasibility}
\end{align}
\label{eq:optimalityFeasibility}%
\end{subequations}
where $\Delta J^{(k)},\Delta L^{(k)}$, and $\chi^{(k)}$ represent the actual cost reduction, predicted cost reduction, and the infeasibility.

\subsubsection{Step acceptance}

After solving \cref{prob:CVX_p}, \texttt{SCvx*} accepts the solution and updates $\bar{z}^{(k)} $ if a certain criterion is met.
With $\rho_0\in(0,1) $, the acceptance criterion is given by
\begin{align}
\begin{aligned}
\rho_0 \leq \rho^{(k)},
\quad
\rho^{(k)} = 
{\Delta J^{(k)}}/{\Delta L^{(k)}} 
\label{eq:acceptanceCriterion}
\end{aligned}
\end{align}
where
$\rho^{(k)}$ measures the relative decrease of the objective;
an iteration is accepted only if $\rho^{(k)}$ is greater than $\rho_0$, which helps avoid accepting bad steps (e.g., those which do not improve the non-convex objective).
This criterion is based on the original \texttt{SCvx} \cite{Mao2016}, but not exactly the same;
this point is made precise in the following remark.

\begin{remark}
The definition of $\Delta L^{(k)} $ in \cref{eq:predictedReduction} is different from \texttt{SCvx} \cite{Mao2019a,Mao2016}.
As \texttt{SCvx} considers a fixed penalty weight, their definition of $\Delta L$ with our notation corresponds to $ J^{(k-1)}(\bar{z}^{(k)}) - L^{(k)} (z^{(k)}_*,\xi^{(k)}_*,\zeta^{(k)}_*)$, which is not always non-negative because $J^{(k)}(\bar{z}^{(k)}) \neq J^{(k-1)}(\bar{z}^{(k)}) $.
With the careful definition of $\Delta L^{(k)} $ as in \cref{eq:predictedReduction}, a key result $\Delta L^{(k)} \geq 0 $ is guaranteed in \texttt{SCvx*}, as proved in \cref{lemma:positivePredictedCost}.
\end{remark}

\subsubsection{Lagrange multiplier update}

Although the multipliers $\lambda$ and $\mu$ are fixed in each convex subproblem, they must be updated to march toward the convergence of \cref{prob:NLP}.
\texttt{SCvx*} updates $\lambda $ and $\mu$ when the current iteration is accepted \textit{and} the following condition is met:
\begin{align}
\begin{aligned}
\abs{\Delta J^{(k)}} < \delta^{(k)},
\label{eq:LagUpdateCond}
\end{aligned}
\end{align}
where $\delta^{(k)} \in\R $ is updated such that $\delta^{(k)} \to 0 $ as $k\to\infty$.
The motivation behind this criterion is to satisfy the asymptotically exact minimization requirement clarified in \cref{lemma:ALconvergence}.
A simple design for updating $\delta^{(k)}$ is:
\begin{align}
\delta^{(k+1)} = 
\begin{cases}
\abs{\Delta J^{(k)}} & \mathrm{if}\ \delta^{(k)} = \infty \\
\gamma \delta^{(k)}  & \mathrm{otherwise} \quad \quad (\gamma \in(0,1) )
\end{cases}
\label{eq:stationaryConditionUpdate}
\end{align}
when \cref{eq:LagUpdateCond} is met.
Any other scheme than \cref{eq:stationaryConditionUpdate} may be used as long as it satisfies $\delta^{(k)} \to 0 $ as $k\to\infty$.

Every time when \cref{eq:LagUpdateCond} is met, \texttt{SCvx*} updates $w^{(k)}$, $\lambda^{(k)}$, $\mu^{(k)} $ using \cref{eq:ALupdate}, where $g_{\mathrm{all}}(\cdot)$ and $h_{\mathrm{all}}(\cdot) $ must be replaced by $g(\cdot) $ and $h(\cdot)$.
\cref{sec:convergenceProof} shows that this scheme ensures satisfying the convergence conditions in \cref{lemma:ALconvergence}.
A stricter condition than \cref{eq:LagUpdateCond} is also possible to guarantee the convergence rate, as shown in \cref{sec:convergenceRate}.

\subsubsection{Trust region update}

The trust region radius $r$ plays an important role in preventing artificial unboundedness.
$\rho^{(k)}$ in \cref{eq:acceptanceCriterion} is used to quantify the quality of the current radius $r^{(k)} $.
Like original \texttt{SCvx} \cite{Mao2016}, given the user-defined initial radius $r^{(1)}>0 $ and thresholds $\rho_1$, $\rho_2 \in\R $ that satisfy $\rho_0 < \rho_1 < \rho_2 $, \texttt{SCvx*} updates $r^{(k)} $ as follows:
\begin{align}
r^{(k+1)} =
\begin{cases}
\max\{r^{(k)}/\alpha_1, r_{\min}\} & \mathrm{if}\ \rho^{(k)} < \rho_1 \\
r^{(k)} 			& \mathrm{elseif}\ \rho^{(k)} < \rho_2 
\\
\min\{\alpha_2 r^{(k)}, r_{\max}\} 	& \mathrm{else}
\end{cases}
\label{eq:TRupdate}
\end{align}
where $\alpha_1>1$ and $\alpha_2>1$ determine the contracting and enlarging ratios of $r^{(k)} $, respectively, and $0 < r_{\min} < r_{\max} $.
Although the convergence proof in \cref{sec:convergenceProof} does not require $r^{(k)} $ be bounded from above in theory, \texttt{SCvx*} implements the upper bound $r_{\max}$ for numerical stability.

\subsubsection{Convergence check}

\texttt{SCvx*} detects the convergence to \cref{prob:NLP} and terminates the iteration if:
\begin{align}
\abs{\Delta J^{(k)}} \leq \epsilon_{\mathrm{opt}}
\quad \land \quad
\chi^{(k)} \leq \epsilon_{\mathrm{feas}}
\end{align}
where $\epsilon_{\mathrm{opt}},\epsilon_{\mathrm{feas}} \in\R$ are small positive user-defined scalars representing the optimality and feasibility tolerances.


\section{Convergence Analysis}
\label{sec:convergenceAnalysis}

This section presents the convergence analysis of \texttt{SCvx*}.
\cref{sec:convergenceProof} shows the global strong convergence to \cref{prob:NLP} while \cref{sec:convergenceRate} discusses its convergence rate.

\subsection{Convergence}
\label{sec:convergenceProof}

Let us first introduce \cref{lemma:necessaryCondition}.
\begin{lemma}[Local Optimality Necessary Condition]
\label{lemma:necessaryCondition}
If $z_*$ is a local minimizer of $J^{(k)} $ in \cref{prob:NLP_p}, then $z_*$ is a stationary point of $J^{(k)} $ with the current $w^{(k)},\lambda^{(k)},\mu^{(k)} $.
\end{lemma}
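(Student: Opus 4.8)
The plan is to use the elementary fact that a point minimizing a function over a set, even only locally, satisfies the corresponding first-order stationarity condition; only two features of \cref{prob:NLP_p} require care. First, the penalized objective $J^{(k)}(z)=f_0(z)+P^{(k)}(g(z),h(z))$ is nonsmooth, because the term $\mu^{(k)}\cdot[h(z)]_+$ in $P^{(k)}$ is not differentiable wherever a component of $h(z)$ vanishes; hence ``stationary point'' has to be read in the generalized (directional-derivative / subdifferential) sense, not literally as $\nabla J^{(k)}(z_*)=0$. Second, the minimization is constrained by the convex set $\mathcal{C}\triangleq\{z:g_{\mathrm{affine}}(z)=0,\ h_{\mathrm{cvx}}(z)\le 0\}$, so stationarity must be understood relative to $\mathcal{C}$.

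First I would record the regularity of $J^{(k)}$ for the fixed parameters $w^{(k)},\lambda^{(k)},\mu^{(k)}$. Writing $J^{(k)}$ as the sum of $f_0(z)+\lambda^{(k)}\cdot g(z)+\tfrac{w^{(k)}}{2}g(z)\cdot g(z)+\tfrac{w^{(k)}}{2}[h(z)]_+\cdot[h(z)]_+$ and of $\mu^{(k)}\cdot[h(z)]_+$, the first group is continuously differentiable near $z_*$ — using that $t\mapsto([t]_+)^2$ is $C^1$ and that $f_0,g,h$ are smooth there by Assumption 1 — while the remaining term is the composition of the convex, nondecreasing, globally Lipschitz map $v\mapsto\mu^{(k)}\cdot[v]_+$ with the $C^1$ map $h$. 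Therefore $J^{(k)}$ is locally Lipschitz and one-sidedly directionally differentiable at every point, with $(J^{(k)})'(z;d)$ given by the chain rule: the directional derivative of the smooth group plus, for each $j$, $\mu^{(k)}_j$ times the one-sided derivative of $[\,\cdot\,]_+$ at $h_j(z)$ along $\nabla h_j(z)^\top d$. Then I would run the standard variational argument. Since $\mathcal{C}$ is convex, for any $z\in\mathcal{C}$ the point $z_*+t(z-z_*)$ lies in $\mathcal{C}$ for all $t\in[0,1]$, so local optimality of $z_*$ over $\mathcal{C}$ gives $J^{(k)}(z_*+t(z-z_*))\ge J^{(k)}(z_*)$ for all small $t>0$; subtracting $J^{(k)}(z_*)$, dividing by $t$, and letting $t\downarrow 0$ yields $(J^{(k)})'(z_*;z-z_*)\ge 0$, which by convexity of $\mathcal{C}$ extends to every $z\in\mathcal{C}$. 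Equivalently $0\in\partial J^{(k)}(z_*)+N_{\mathcal{C}}(z_*)$, which is exactly the assertion that $z_*$ is a stationary point of $J^{(k)}$ with the current $w^{(k)},\lambda^{(k)},\mu^{(k)}$.

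The main obstacle is purely the bookkeeping around nonsmoothness: one must justify the existence of the directional derivative and the validity of the chain rule for the convex-composite term $\mu^{(k)}\cdot[h(\cdot)]_+$, and phrase the conclusion as a subdifferential / normal-cone inclusion rather than $\nabla J^{(k)}(z_*)=0$. If instead a KKT-style statement with explicit multipliers for $g_{\mathrm{affine}}$ and $h_{\mathrm{cvx}}$ is desired, the convexity of those constraints together with the constraint qualification available under Assumption 1 converts the normal-cone inclusion into the usual multiplier conditions; but the directional-derivative characterization above is already what the subsequent convergence analysis relies on.
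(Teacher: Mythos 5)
Your argument is correct, but it is considerably more elaborate than the paper's, which disposes of the lemma in one line by applying Theorem 2.2 of Nocedal and Wright (the unconstrained smooth first-order necessary condition $\nabla J^{(k)}(z_*)=0$) to \cref{prob:NLP_p}; the paper treats $J^{(k)}$ as differentiable (it explicitly asserts, in the proof of \cref{lemma:iterationAcceptance}, that the augmented-Lagrangian penalty is differentiable, in contrast to the $\ell_1$ penalty of \texttt{SCvx}) and reads ``stationary point'' as a vanishing gradient, deferring the convex constraints $g_{\mathrm{affine}}=0$, $h_{\mathrm{cvx}}\le 0$ to the later convergence argument, where they are handled by noting that every subproblem solution satisfies them exactly. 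Your route instead keeps the constraint set $\mathcal{C}$ inside the stationarity statement (arriving at the normal-cone inclusion $0\in\partial J^{(k)}(z_*)+N_{\mathcal{C}}(z_*)$) and treats the term $\mu^{(k)}\cdot[h(\cdot)]_+$ as genuinely nonsmooth. On the latter point you are being more honest than the paper: with the penalty written as in \cref{eq:penaltyAL}, the term $\mu\cdot[h]_+$ is indeed not differentiable where a component $h_j$ vanishes with $\mu_j>0$ (only the squared term $[h]_+\cdot[h]_+$ is $C^1$; the standard smooth augmented Lagrangian for inequalities uses $\tfrac{1}{2w}([\mu+wh]_+^2-\mu^2)$ instead), so your directional-derivative/regularity bookkeeping is the price of working with the formula as written, and your appeal to Clarke regularity of the convex-composite term is what makes the chain rule and the subdifferential characterization legitimate. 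What your version buys is a statement that is actually well-defined without the implicit smoothness assumption and that correctly localizes stationarity to $\mathcal{C}$; what the paper's version buys is brevity and a conclusion in exactly the form ($\nabla_z J^{(k)}=0$) that the proof of the main convergence theorem later invokes, so if you adopt your formulation you should also note that the normal-cone inclusion still delivers $\nabla_z\mathcal{L}_{w^{(k)}}\to 0$ in that argument.
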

\begin{proof}
Apply Theorem 2.2 of \cite{Nocedal2006} to \cref{prob:NLP_p}.
\end{proof}

We then present \cref{lemma:positivePredictedCost}, which states the non-negativity of $\Delta L^{(k)}$ as well as the stationarity of $J^{(k)}$ when $\Delta L^{(k)} = 0 $.
This extends Theorem 3 of \cite{Mao2016} (also Theorem 3.10 of \cite{Mao2019a}) to account for the effect of varying $w^{(k)},\lambda^{(k)},\mu^{(k)} $.

\begin{lemma}
\label{lemma:positivePredictedCost}
The predicted cost reductions $\Delta L^{(k)} $ in \cref{eq:predictedReduction} satisfy $\Delta L^{(k)} \geq 0 $ for all $k $.
Also, $\Delta L^{(k)} = 0 $ implies that the reference point $z = \bar{z}^{(k)}$ is a stationary point of $J^{(k)}$.
\end{lemma}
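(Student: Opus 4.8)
The plan is to compare the convex subproblem's optimal value to the value obtained by the trivial feasible point $z=\bar z^{(k)}$, and then to leverage the convexity of $L^{(k)}$ together with the first-order optimality conditions of \cref{prob:CVX_p} to get the stationarity conclusion. First I would observe that $(z,\xi,\zeta)=(\bar z^{(k)},\,\widetilde g(\bar z^{(k)}),\,[\,\widetilde h(\bar z^{(k)})]_+)=(\bar z^{(k)},\,g(\bar z^{(k)}),\,[h(\bar z^{(k)})]_+)$ is feasible for \cref{prob:CVX_p}: the linearization is exact at the reference point, so $\widetilde g(\bar z^{(k)})=g(\bar z^{(k)})$ and $\widetilde h(\bar z^{(k)})=h(\bar z^{(k)})$, the choice $\zeta=[h(\bar z^{(k)})]_+\ge 0$ makes $\widetilde h(\bar z^{(k)})\le\zeta$ hold, the trust-region constraint $\norm{\bar z^{(k)}-z}_\infty\le r$ is satisfied with equality at zero, and $\bar z^{(k)}$ satisfies the convex constraints $g_{\mathrm{affine}},h_{\mathrm{cvx}}$ because it is the accepted iterate from the previous step (or the user-provided initial point, which is assumed feasible for the convex constraints). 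Evaluating the objective at this point gives $L^{(k)}(\bar z^{(k)},g(\bar z^{(k)}),[h(\bar z^{(k)})]_+)=f_0(\bar z^{(k)})+P^{(k)}(g(\bar z^{(k)}),[h(\bar z^{(k)})]_+)$. Since $P^{(k)}(g,h)$ depends on $h$ only through $[h]_+$, and $[[h]_+]_+=[h]_+$, this equals $f_0(\bar z^{(k)})+P^{(k)}(g(\bar z^{(k)}),h(\bar z^{(k)}))=J^{(k)}(\bar z^{(k)})$. Because $(z^{(k)}_*,\xi^{(k)}_*,\zeta^{(k)}_*)$ is the minimizer of $L^{(k)}$ over the feasible set, $L^{(k)}(z^{(k)}_*,\xi^{(k)}_*,\zeta^{(k)}_*)\le J^{(k)}(\bar z^{(k)})$, which is exactly $\Delta L^{(k)}\ge 0$.

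For the second claim, suppose $\Delta L^{(k)}=0$, i.e., $(\bar z^{(k)},g(\bar z^{(k)}),[h(\bar z^{(k)})]_+)$ is itself a global minimizer of the convex \cref{prob:CVX_p}. I would then write down the KKT conditions of \cref{prob:CVX_p} at this point. The key geometric fact is that at $z=\bar z^{(k)}$ the trust-region constraint is inactive (it holds strictly, since $\norm{\bar z^{(k)}-\bar z^{(k)}}_\infty=0<r$), so the trust-region multiplier vanishes and the stationarity condition in $z$ reduces to: there exist multipliers for $\widetilde g(z)=\xi$, $\widetilde h(z)\le\zeta$, $g_{\mathrm{affine}}$, $h_{\mathrm{cvx}}$ such that $\nabla f_0(\bar z^{(k)})$ plus the corresponding constraint-gradient terms vanishes. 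Using the stationarity in $(\xi,\zeta)$, the multipliers of $\widetilde g=\xi$ and $\widetilde h\le\zeta$ are pinned down to be exactly the (sub)gradients of $P^{(k)}$ with respect to its first two arguments evaluated at $(g(\bar z^{(k)}),h(\bar z^{(k)}))$; and since $\nabla_z\widetilde g(\bar z^{(k)})=\nabla_z g(\bar z^{(k)})$ and $\nabla_z\widetilde h(\bar z^{(k)})=\nabla_z h(\bar z^{(k)})$ by exact linearization, the chain rule identifies the resulting expression with a subgradient of $z\mapsto J^{(k)}(z)+\iota_{\{g_{\mathrm{affine}},h_{\mathrm{cvx}}\}}$ at $\bar z^{(k)}$. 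Hence $0$ belongs to that subdifferential, which is precisely the statement that $\bar z^{(k)}$ is a stationary point of $J^{(k)}$ over the convex-constraint set in the sense of \cref{lemma:necessaryCondition}.

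The main obstacle I anticipate is handling the nonsmoothness of $[h]_+$ carefully: the penalty $P^{(k)}$ is only once continuously differentiable (its Hessian jumps across $h=0$), so at components where some $h_j(\bar z^{(k)})=0$ the "gradient" of $P^{(k)}$ and the corresponding inequality multiplier $\zeta$-constraint must be treated via subgradients / the appropriate one-sided KKT conditions rather than classical derivatives. I would address this by noting that $[x]_+^2$ is $C^1$ with derivative $2[x]_+$, so $P^{(k)}$ is actually $C^1$ in $h$ with gradient $\mu^{(k)}+w^{(k)}[h]_+$ applied componentwise through $[\cdot]_+$ — this keeps everything within smooth KKT theory except at the kink of $[\cdot]_+$ itself, where the standard argument for exact penalty functions (matching left/right derivatives, or equivalently the complementarity between the $\zeta\ge 0$ multiplier and the $\widetilde h\le\zeta$ multiplier) closes the gap. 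A secondary, minor point is confirming that $\bar z^{(k)}$ indeed satisfies the convex constraints at every iteration; this follows by induction since accepted iterates are feasible $z^{(k)}_*$ for \cref{prob:CVX_p} and the initial reference point is assumed admissible.
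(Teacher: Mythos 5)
Your proof is correct and, for the first claim, is essentially the paper's argument: both compare the optimal value of \cref{prob:CVX_p} against the trivial feasible point built from the reference $\bar z^{(k)}$ with exact linearization, giving $L^{(k)}(z^{(k)}_*,\xi^{(k)}_*,\zeta^{(k)}_*)\le J^{(k)}(\bar z^{(k)})$. You are in fact slightly more careful than the paper here: the paper plugs in $\zeta=h(\bar z^{(k)})$, which violates $\zeta\ge 0$ when some $h_j(\bar z^{(k)})<0$, whereas your choice $\zeta=[h(\bar z^{(k)})]_+$ is feasible and yields the same objective value since $P^{(k)}$ sees $\zeta$ only through $[\cdot]_+$.

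For the second claim the routes diverge. The paper argues that $\Delta L^{(k)}=0$ forces $z^{(k)}_*=\bar z^{(k)}$, asserts that $\bar z^{(k)}$ is then a \emph{local minimizer of the non-convex} $J^{(k)}$, and invokes \cref{lemma:necessaryCondition} to conclude stationarity. That intermediate assertion is a leap (optimality in the convexified subproblem does not by itself give local optimality of the non-convex penalty problem); what it really delivers is a first-order condition. Your KKT-based derivation -- trust region inactive at $\bar z^{(k)}$, multipliers of $\widetilde g=\xi$ and $\widetilde h\le\zeta$ pinned by stationarity in $(\xi,\zeta)$ to the partial derivatives of $P^{(k)}$, then the chain rule with $\nabla_z\widetilde g(\bar z^{(k)})=\nabla_z g(\bar z^{(k)})$ -- goes directly to the stationarity conclusion and is the more rigorous path. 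You also correctly flag the nonsmoothness of $\mu^{(k)}\cdot[h]_+$ at $h_j=0$ when $\mu_j^{(k)}>0$, a point the paper passes over silently (it elsewhere asserts the AL penalty is differentiable, which fails exactly at that kink); your subgradient/complementarity treatment is what is actually needed there. Two minor caveats: you should state a constraint qualification for \cref{prob:CVX_p} so that KKT multipliers exist at the minimizer (all relaxed constraints are affine in $(z,\xi,\zeta)$, so only $h_{\mathrm{cvx}}$ needs attention), and, to match the paper's uniqueness claim $z^{(k)}_*=\bar z^{(k)}$, note that strict convexity of $f_0$ makes the $z$-component of the minimizer unique. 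Neither gap is structural.
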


\begin{proof}
Since $(z^{(k)}_*,\xi^{(k)}_*,\zeta^{(k)}_*) $ solves \cref{prob:CVX_p}, we have
\begin{align}
\begin{aligned}
L^{(k)} (z^{(k)}_*,\xi^{(k)}_*,\zeta^{(k)}_*) \leq 
L^{(k)} (\bar{z}^{(k)},g(\bar{z}^{(k)}),h(\bar{z}^{(k)}))
\\
=
f_0(\bar{z}^{(k)}) + P^{(k)} (g(\bar{z}^{(k)}),h(\bar{z}^{(k)}))
=
J^{(k)}(\bar{z}^{(k)})
\end{aligned}
\end{align}
Thus, it implies that $\Delta L^{(k)} \geq 0 $ for all $k $ and that $\Delta L^{(k)} = 0 $ holds
if and only if $z^{(k)}_* = \bar{z}^{(k)}$.
From this, $\Delta L^{(k)} = 0 $ implies that $z = \bar{z}^{(k)}$ is a local minimizer of $J^{(k)}$, and hence, from \cref{lemma:necessaryCondition}, a stationary point of $J^{(k)}$.
\end{proof}

\cref{lemma:positivePredictedCost} is a key for \texttt{SCvx*} to inherit two favorable aspects of the original \texttt{SCvx} algorithm, namely, (1) the assured acceptance of iteration and (2) the assured stationarity of limit points.
\cref{lemma:iterationAcceptance,lemma:limitPoint} clarify these two aspects in the context of \texttt{SCvx*} by extending those of \texttt{SCvx}.

\begin{lemma}
\label{lemma:iterationAcceptance}
The \texttt{SCvx*} iterations are guaranteed to be accepted (i.e., Line \ref{line:iterationAcceptance} is satisfied) within a finite number of iterations after an iteration is rejected.
\end{lemma}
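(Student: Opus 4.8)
The plan is to mimic the classical trust-region argument used for \texttt{SCvx} in \cite{Mao2016,Mao2019a}, adapted to the augmented-Lagrangian penalty objective $J^{(k)}$ with the \emph{fixed} multipliers $w^{(k)},\lambda^{(k)},\mu^{(k)}$ that hold during a stretch of consecutive rejections. The key observation is that between rejections the multipliers and penalty weight do not change (Line~\ref{line:ALupdateCriterion} updates are only triggered on an \emph{accepted} step), so $J^{(k)}$, $L^{(k)}$, and hence the predicted reduction $\Delta L^{(k)}$ are all evaluated with respect to one fixed penalty function. I would argue by contradiction: suppose from some iteration on, every step is rejected, i.e.\ $\rho^{(k)}<\rho_0$ forever. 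Then the trust region radius contracts geometrically via \cref{eq:TRupdate}, so $r^{(k)}\to r_{\min}$ (or, if $r_{\min}$ were $0$, $r^{(k)}\to 0$); the essential point is that $r^{(k)}$ becomes small.

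Next I would quantify $\Delta L^{(k)}$ from below in terms of $r^{(k)}$ and the stationarity gap at $\bar z^{(k)}$. By \cref{lemma:positivePredictedCost}, $\Delta L^{(k)}\ge 0$, and $\Delta L^{(k)}=0$ only at a stationary point of $J^{(k)}$. If $\bar z^{(k)}$ is already stationary for $J^{(k)}$, then (as in the \texttt{SCvx} analysis) the convex subproblem returns $z_*^{(k)}=\bar z^{(k)}$ and one checks directly that $\rho^{(k)}$ is defined to be $1\ge\rho_0$ (the $\Delta L^{(k)}=0$ branch, Line~7), so the step is trivially ``accepted'' and we are done. Otherwise $\bar z^{(k)}$ is non-stationary, and I would show there is a uniform constant $\kappa>0$ (depending on the stationarity gap, which stays bounded away from $0$ along the tail since $\bar z^{(k)}=\bar z$ is frozen during rejections) with $\Delta L^{(k)}\ge \kappa\,\min\{r^{(k)},1\}$ for all small $r^{(k)}$ — this is the standard ``fraction of Cauchy/linearized decrease'' estimate, using convexity of $f_0$ and of the penalty $P^{(k)}$ in its arguments together with the relaxation variables $\xi,\zeta$ that keep \cref{prob:CVX_p} feasible inside the trust region.

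Then I would bound the mismatch between actual and predicted reduction. Writing $\Delta J^{(k)}-\Delta L^{(k)} = L^{(k)}(z_*^{(k)},\xi_*^{(k)},\zeta_*^{(k)}) - J^{(k)}(z_*^{(k)})$, this difference is controlled by the linearization error of $g,h$ about $\bar z^{(k)}$ composed with the (locally Lipschitz) penalty $P^{(k)}$; since $g,h$ are $C^1$ near $\bar z^{(k)}$ by Assumption~1, the error is $o(\|z_*^{(k)}-\bar z^{(k)}\|) = o(r^{(k)})$, i.e.\ $|\Delta J^{(k)}-\Delta L^{(k)}|\le \eta(r^{(k)})\,r^{(k)}$ with $\eta(r)\to 0$ as $r\to 0$. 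Combining, $|\rho^{(k)}-1| = |\Delta J^{(k)}-\Delta L^{(k)}|/\Delta L^{(k)} \le \eta(r^{(k)})\,r^{(k)}/(\kappa\min\{r^{(k)},1\}) = \eta(r^{(k)})/\kappa \to 0$, so $\rho^{(k)}\to 1 > \rho_0$, contradicting perpetual rejection. Hence an accepting step must occur in finitely many iterations after any rejection.

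The main obstacle I expect is making the lower bound $\Delta L^{(k)}\ge\kappa\min\{r^{(k)},1\}$ rigorous for the augmented-Lagrangian penalty rather than the exact-penalty used in \texttt{SCvx}: one must verify that the relaxation variables $(\xi,\zeta)$ do not ``absorb'' the available descent, i.e.\ that the convex subproblem still predicts genuine decrease of $J^{(k)}$ when $\bar z^{(k)}$ is non-stationary. This amounts to checking that a small step along a descent direction of $J^{(k)}$ at $\bar z^{(k)}$, with $\xi,\zeta$ set to the corresponding linearized values, strictly decreases $L^{(k)}$ — which follows from the directional-derivative computation $\nabla J^{(k)}(\bar z^{(k)})$ being the same as the subproblem's linearized objective gradient at $\bar z^{(k)}$, but it needs the non-smoothness of $[\cdot]_+$ to be handled (via one-sided directional derivatives). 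A secondary, more routine obstacle is confirming that the constants $\kappa,\eta$ can be taken uniform over the finitely-many-or-tail iterations in question, which is immediate because $\bar z^{(k)}$ and $(w^{(k)},\lambda^{(k)},\mu^{(k)})$ are constant along a rejection streak.
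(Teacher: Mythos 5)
Your proposal is correct and follows essentially the same route as the paper: the paper's proof simply invokes \cref{lemma:necessaryCondition,lemma:positivePredictedCost} together with the trust-region contradiction argument of Lemma 3 in \cite{Mao2016} (Lemma 3.11 of \cite{Mao2019a}), which is exactly the $\rho^{(k)}\to 1$ argument you reconstruct, with the paper noting (as you do) that the only adaptation needed is replacing the generalized directional derivatives of the nonsmooth $l_1$ penalty by ordinary ones for the augmented-Lagrangian penalty. Your write-up just spells out in detail what the paper delegates to the cited \texttt{SCvx} proof.
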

\begin{proof}
The proof is straightforward by combining \cref{lemma:positivePredictedCost,lemma:necessaryCondition} and the proof for Lemma 3 of \cite{Mao2016} (or Lemma 3.11 of \cite{Mao2019a}), where the generalized differential and the generalized directional derivative can be replaced with the gradient and directional derivative ($\because$ unlike \texttt{SCvx}, the penalty function of \texttt{SCvx*} is differentiable due to the formulation based on the augmented Lagrangian method).
\end{proof}

\begin{lemma}
\label{lemma:limitPoint}
A sequence $\{z^{(k)}_* \}$ generated by \texttt{SCvx*} when the Lagrange multipliers and penalty weight are fixed is guaranteed to have limit points, and any limit point $\hat{z} $ is a stationary point of \cref{prob:NLP_p}.
\end{lemma}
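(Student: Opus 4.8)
The plan is to reduce the statement to the global convergence theorem of the original \texttt{SCvx} algorithm. When $w^{(k)},\lambda^{(k)},\mu^{(k)}$ are frozen at $w,\lambda,\mu$, the penalty $P^{(k)}$ and hence the objectives $J^{(k)}\equiv J$ and $L^{(k)}\equiv L$ no longer depend on $k$, so the iteration in \cref{alg:SCvx*} (linearize, solve \cref{prob:CVX_p}, accept via \cref{eq:acceptanceCriterion}, update the radius via \cref{eq:TRupdate}) coincides with \texttt{SCvx} applied to \cref{prob:NLP_p}, i.e.\ to the minimization of $J$ over the convex set $\{z:g_{\mathrm{affine}}(z)=0,\ h_{\mathrm{cvx}}(z)\le 0\}$. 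I would therefore follow the proof of the \texttt{SCvx} convergence theorem (Theorem~4 of \cite{Mao2016}, Theorem~3.12 of \cite{Mao2019a}), noting that, since the augmented-Lagrangian penalty is differentiable (as observed in \cref{lemma:iterationAcceptance}), every generalized derivative/subdifferential in that proof is replaced by an ordinary gradient and directional derivative.

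\emph{Existence of limit points.} On any accepted iteration, \cref{eq:acceptanceCriterion} and \cref{lemma:positivePredictedCost} give $\Delta J^{(k)}=\rho^{(k)}\Delta L^{(k)}\ge\rho_0\Delta L^{(k)}\ge 0$ with $\bar z^{(k+1)}=z^{(k)}_*$, hence $J(\bar z^{(k+1)})=J(\bar z^{(k)})-\Delta J^{(k)}\le J(\bar z^{(k)})$; rejected iterations leave $\bar z^{(k)}$ unchanged, and by \cref{lemma:iterationAcceptance} an acceptance recurs after finitely many steps. Thus $\{J(\bar z^{(k)})\}$ is nonincreasing and $\{\bar z^{(k)}\}$ lies in the sublevel set $\mathcal S=\{z:J(z)\le J(\bar z^{(1)})\}$; since the penalty terms of $J$ are nonnegative and $f_0$ is strictly convex and continuously differentiable, $\mathcal S$ is bounded (as in the \texttt{SCvx} standing assumptions), so $\{\bar z^{(k)}\}$ is bounded. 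Because $\norm{z^{(k)}_*-\bar z^{(k)}}_\infty\le r^{(k)}\le r_{\max}$, the sequence $\{z^{(k)}_*\}$ is bounded and admits limit points.

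\emph{Stationarity of limit points.} Let $\hat z=\lim_j z^{(k_j)}_*$; passing to a further subsequence we may assume each $k_j$ is an accepted iteration (infinitely many exist, and $\bar z^{(k)}$ is constant across the rejected iterations between consecutive accepted ones). Telescoping $J(\bar z^{(k+1)})=J(\bar z^{(k)})-\Delta J^{(k)}$ over accepted iterations, together with $J$ being bounded below on $\mathcal S$, forces $\Delta J^{(k)}\to 0$, hence $\Delta L^{(k)}\le\Delta J^{(k)}/\rho_0\to 0$ along accepted iterations. Suppose $\hat z$ were not a stationary point of $J$ over the convex feasible set, i.e.\ there is a feasible direction $d$ at $\hat z$ with directional derivative $J'(\hat z;d)=-\eta<0$. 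As in \cite{Mao2016}, one then shows that for $\bar z^{(k)}$ close enough to $\hat z$ the relaxed subproblem \cref{prob:CVX_p} has a feasible point along $d$ with objective below $J(\bar z^{(k)})$ by at least $c\min\{r^{(k)},\bar r\}$ for constants $c,\bar r>0$ determined by $\eta$ and the local problem data, so $\Delta L^{(k)}\ge c\min\{r^{(k)},\bar r\}$; combined with $\Delta L^{(k)}\to 0$ this forces $r^{(k)}\to 0$ along the subsequence. But Assumption~1 (continuity of $\nabla g,\nabla h$) makes the convexification gap $\abs{J(z^{(k)}_*)-L(z^{(k)}_*,\xi^{(k)}_*,\zeta^{(k)}_*)}$ of order $o(r^{(k)})$ as $r^{(k)}\to 0$, so $\rho^{(k)}=\Delta J^{(k)}/\Delta L^{(k)}=1+o(1)\to 1>\rho_2$; by \cref{eq:TRupdate} the radius is then enlarged along the subsequence, contradicting $r^{(k)}\to 0$. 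Hence $\hat z$ is a stationary point of $J$, i.e.\ of \cref{prob:NLP_p} with the fixed $w,\lambda,\mu$.

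\emph{Main obstacle.} The crux is the lower estimate $\Delta L^{(k)}\ge c\min\{r^{(k)},\bar r\}$: one must verify that, when $\hat z$ is non-stationary, the slacks $\xi,\zeta$ added in the relaxation \cref{eq:linearConstraints} cannot absorb the predicted decrease, so that a genuine $O(r^{(k)})$ reduction of $L$ survives, and this must be paired with the $o(r^{(k)})$ linearization-error bound, where the $C^1$ regularity of $g,h$ and the Lipschitz (and directional) differentiability of $f_0$ and of $[\cdot]_+$, $[\cdot]_+^2$ enter. These are precisely the technical heart of the \texttt{SCvx} convergence proof, which transfers essentially verbatim once generalized gradients are replaced by gradients.
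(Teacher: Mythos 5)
Your overall strategy---freeze $w,\lambda,\mu$, observe that the iteration then reduces to \texttt{SCvx} applied to \cref{prob:NLP_p}, and adapt the proof of Theorem~4 of \cite{Mao2016} with gradients in place of generalized gradients, using \cref{lemma:positivePredictedCost,lemma:iterationAcceptance}---is exactly what the paper does; its proof is essentially a citation to that theorem. The one substantive divergence is the final contradiction. The paper's key observation is that the update rule \cref{eq:TRupdate} enforces $r^{(k)}\geq r_{\min}>0$, so once non-stationarity of $\hat z$ yields $\Delta L^{(k)}\geq c\min\{r^{(k)},\bar r\}\geq c\min\{r_{\min},\bar r\}>0$, this directly contradicts $\Delta L^{(k)}\to 0$ and the proof ends there. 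You instead run the classical floor-free trust-region argument ($r^{(k)}\to 0$, linearization error $o(r^{(k)})$, $\rho^{(k)}\to 1>\rho_2$, radius enlarged), which is workable but has a soft spot as written: enlargement at the subsequence indices $k_j$ does not by itself contradict $r^{(k_j)}\to 0$, since the radius may be contracted at intermediate iterations; one must additionally argue that below a threshold radius the ratio test always passes, so the radius can never fall below that threshold divided by $\alpha_1$. With $r_{\min}$ available this entire step is unnecessary. A second minor imprecision: boundedness of the iterates does not follow from strict convexity of $f_0$ (strictly convex functions can have unbounded sublevel sets); it is inherited from the compactness of the feasible domain assumed in \cite{Mao2016,Mao2019a}, which is the right thing to invoke for the existence of limit points.
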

\begin{proof}
The proof is straightforward by combining \cref{lemma:positivePredictedCost,lemma:iterationAcceptance} and the proof for Theorem 4 of \cite{Mao2016} (or Theorem 3.13 of \cite{Mao2019a}), where note from \cref{eq:TRupdate} that $r^{(k)} \geq r_{\min} > 0 $.
\end{proof}

Remarkably, \cref{lemma:limitPoint} implies that,
when the values of $\lambda^{(k)}, \mu^{(k)}, w^{(k)}  $ remain fixed, we have
$\Delta J^{(k_i)} \to 0 $
as $i\to \infty$, where $\{z_*^{(k_i)}\} $ is a subsequence of $\{z_*^{(k)}\} $.
This assures the satisfaction of Line \ref{line:ALupdateCriterion}
within a finite (typically a few) number of iterations after $\lambda^{(k)}, \mu^{(k)}, w^{(k)} $ are last updated.
This key property is formally stated in \cref{lemma:guaranteedALupdate}.

\begin{lemma}[]
\label{lemma:guaranteedALupdate}
The \texttt{SCvx*} multipliers and penalty weights are guaranteed to be updated (i.e., Line \ref{line:ALupdateCriterion} is satisfied) within a finite number of iterations after their last update.
\end{lemma}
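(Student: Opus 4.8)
The plan is to argue by contradiction: suppose that, after the last update of $\lambda^{(k)},\mu^{(k)},w^{(k)}$ at some iteration $k_0$, the update condition \cref{eq:LagUpdateCond} is never again satisfied for all $k > k_0$. Since the multipliers and penalty weight are frozen from $k_0$ onward, the iterates $\{z^{(k)}_*\}_{k>k_0}$ are exactly those produced by running \texttt{SCvx*} with fixed $\lambda,\mu,w$, so \cref{lemma:limitPoint} applies to this tail of the sequence. First I would invoke \cref{lemma:limitPoint} to conclude that $\{z^{(k)}_*\}_{k>k_0}$ has a limit point $\hat z$, which is a stationary point of \cref{prob:NLP_p}.

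Next I would extract the consequence that $\Delta J^{(k_i)} \to 0$ along a subsequence $k_i \to \infty$. The cleanest route is the one already flagged in the text between \cref{lemma:limitPoint} and the statement: because all accepted steps decrease $J^{(k)}$ (with fixed multipliers $J^{(k)}$ is a single fixed function $J$), the actual reductions $\Delta J^{(k)} \ge 0$ form a summable-or-at-least-vanishing pattern along the subsequence converging to $\hat z$; more directly, $J(\bar z^{(k)})$ is nonincreasing and bounded below (by $f_0$ plus nonnegative penalty terms, or by continuity near $\hat z$), hence convergent, so $\Delta J^{(k)} = J(\bar z^{(k)}) - J(\bar z^{(k+1)}) \to 0$ over accepted iterations — and by \cref{lemma:iterationAcceptance} accepted iterations occur infinitely often. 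Either way, $|\Delta J^{(k)}|$ eventually drops below any fixed positive threshold infinitely often.

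Then I would confront the frozen value of $\delta^{(k)}$. Since \cref{eq:LagUpdateCond} is assumed never triggered after $k_0$, the tolerance $\delta^{(k)}$ is also never updated (Line \ref{line:KKTupdate} lies inside the block guarded by \cref{eq:LagUpdateCond}), so $\delta^{(k)} = \delta^{(k_0+1)} =: \bar\delta$ is a fixed strictly positive constant for all $k > k_0$. But we just showed $|\Delta J^{(k)}| \to 0$ along accepted iterations, so for $k$ large enough and accepted we get $|\Delta J^{(k)}| < \bar\delta = \delta^{(k)}$, i.e.\ \cref{eq:LagUpdateCond} holds — contradicting the assumption that it is never satisfied after $k_0$. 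Since accepted iterations are guaranteed within finitely many steps of any rejection by \cref{lemma:iterationAcceptance}, this actually shows the trigger fires within a finite number of iterations after the last update, which is the claim.

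The main obstacle I anticipate is the bookkeeping around the step-acceptance filter: $\Delta J^{(k)} \to 0$ must be established only over the accepted iterations (rejected ones do not update $\bar z$ and contribute no decrease), and one must be careful that $\delta^{(k)}$ genuinely remains frozen — which holds precisely because its update is nested inside the same conditional as the multiplier update. A secondary subtlety is ensuring $J(\bar z^{(k)})$ is bounded below on the relevant region so that its monotone decrease forces $\Delta J^{(k)} \to 0$; this is covered by Assumption 1 together with continuity of $f_0,g,h$ near the limit point $\hat z$ supplied by \cref{lemma:limitPoint}, but it should be stated explicitly rather than left implicit.
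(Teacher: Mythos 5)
Your proposal is correct and follows essentially the same route as the paper: a proof by contradiction in which freezing $\lambda,\mu,w$ (and hence $\delta^{(k)}$) lets \cref{lemma:limitPoint} (together with \cref{lemma:iterationAcceptance}) force $\abs{\Delta J^{(k)}}$ below the fixed positive threshold $\delta^{(k)}$ in finitely many iterations. You merely spell out more explicitly than the paper why $\Delta J^{(k)}\to 0$ over accepted steps (monotone decrease of the fixed $J$ plus boundedness below) and why $\delta^{(k)}$ stays frozen, which the paper leaves to the citation of \cref{lemma:limitPoint}.
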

\begin{proof}
The proof is by contradiction.
Suppose Line \ref{line:ALupdateCriterion} is not satisfied for indefinite number of iterations, i.e., $\abs{\Delta J^{(k)}} \geq \delta^{(k)}$ for $k\to \infty$.
It implies $\lambda^{(k)}, \mu^{(k)}, w^{(k)} $ remain the same for $k\to \infty$.
However, when $\lambda^{(k)}, \mu^{(k)}, w^{(k)} $ remain the same values, there is at least one subsequence with $\Delta J^{(k_i)} \to 0 $ due to \cref{lemma:limitPoint}, which eventually satisfies $\abs{\Delta J^{(k)}} < \delta^{(k)}$ for any $\delta^{(k)}>0$ without requiring infinite $k$.
This contradicts $\abs{\Delta J^{(k)}} \geq \delta^{(k)}$ for $k\to \infty$, and thus implies \cref{lemma:guaranteedALupdate}.
\end{proof}

We are now ready to present the main result of this paper on the convergence property of the \texttt{SCvx*} algorithm.

\begin{theorem}[Global Strong Convergence with Feasibility]
\texttt{SCvx*} achieves global convergence to a feasible local optimum of the original problem, \cref{prob:NLP}.
\end{theorem}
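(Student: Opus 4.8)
The plan is to reduce the claim to \cref{lemma:ALconvergence}: it suffices to identify, inside the \texttt{SCvx*} iteration, a sequence of approximate minimizers together with multiplier/penalty updates that meet the three hypotheses of that lemma, namely (i) asymptotically exact minimization of the augmented Lagrangian, (ii) the updates \cref{eq:ALupdate} with $w^{(k+1)}=\beta w^{(k)}$, $\beta>1$, and (iii) boundedness of $\{\lambda^{(k)},\mu^{(k)}\}$. Once these hold, \cref{lemma:ALconvergence} yields global convergence of the retained sequence to a local optimum $\{z_*,\lambda_*,\mu_*\}$ of \cref{prob:NLP}; since $z_*$ is a local optimum of the \emph{original} problem it is in particular feasible for \cref{prob:NLP}, which is the feasibility guarantee being claimed.

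First I would extract the relevant subsequence and re-index. By \cref{lemma:guaranteedALupdate}, Line~\ref{line:ALupdateCriterion} is triggered infinitely often; let $k_1<k_2<\cdots$ enumerate those iterations and set $\hat z^{(m)}:=z^{(k_m)}_*$, $\hat\lambda^{(m)}:=\lambda^{(k_m)}$, $\hat\mu^{(m)}:=\mu^{(k_m)}$, $\hat w^{(m)}:=w^{(k_m)}$. Because the weight and multipliers change only at these iterations, $\{\hat z^{(m)},\hat\lambda^{(m)},\hat\mu^{(m)},\hat w^{(m)}\}$ obeys \cref{eq:ALupdate} with $m$ as the index (and with $g$, $h$ in place of $g_{\mathrm{all}}$, $h_{\mathrm{all}}$), so (ii) holds and $\hat w^{(m)}=\beta^{m-1}\hat w^{(1)}\to\infty$. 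For (i): the trigger requires $\abs{\Delta J^{(k_m)}}<\delta^{(k_m)}$, and since $\delta^{(k)}$ is scaled by $\gamma\in(0,1)$ at every trigger after the first (\cref{eq:stationaryConditionUpdate}) and unchanged otherwise, $\delta^{(k_m)}\to 0$, hence $\Delta J^{(k_m)}\to 0$. As each $k_m$ is an accepted iteration, $\rho_0\le\rho^{(k_m)}=\Delta J^{(k_m)}/\Delta L^{(k_m)}$ with $\Delta L^{(k_m)}\ge 0$ by \cref{lemma:positivePredictedCost}, so $\Delta L^{(k_m)}\to 0$ (immediately if $\Delta L^{(k_m)}=0$, otherwise from $0<\Delta L^{(k_m)}\le\Delta J^{(k_m)}/\rho_0$). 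Using that the trust region does not collapse ($r^{(k)}\ge r_{\min}>0$), the content of \cref{lemma:positivePredictedCost} ($\Delta L^{(k)}=0$ forces $z^{(k)}_*=\bar z^{(k)}$, a stationary point of $J^{(k)}$), and the differentiability/continuity in Assumption~1, I would argue that $\Delta L^{(k_m)}\to 0$ makes $\hat z^{(m)}$ an increasingly accurate stationary point of $J^{(k_m)}=\mathcal{L}_{\hat w^{(m)}}(\,\cdot\,,\hat\lambda^{(m)},\hat\mu^{(m)})$, i.e. $\norm{\nabla_z\mathcal{L}_{\hat w^{(m)}}(\hat z^{(m)},\hat\lambda^{(m)},\hat\mu^{(m)})}_2\to 0$; this is hypothesis (i), and it is exactly the asymptotic-exactness role of \cref{eq:LagUpdateCond} noted after \cref{lemma:ALconvergence}.

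The remaining, and hardest, step is hypothesis (iii): boundedness of $\{\lambda^{(k)},\mu^{(k)}\}$. I would establish it by coupling the local augmented-Lagrangian theory underlying \cref{lemma:ALconvergence} with the behaviour of \texttt{SCvx*}: once $\hat w^{(m)}$ exceeds the threshold $w_*$ of \cref{lemma:ALconvergence} and $\hat z^{(m)}$ has entered the neighborhood of $z_*$ on which Assumption~1 (LICQ and \cref{theorem:SOSC}) holds, the update \cref{eq:ALupdate} contracts $(\hat\lambda^{(m)},\hat\mu^{(m)})$ toward $(\lambda_*,\mu_*)$ up to an error of order $\norm{\nabla_z\mathcal{L}_{\hat w^{(m)}}(\cdot)}_2\to 0$, so the multiplier sequence converges and is therefore bounded; that the iterates reach that neighborhood follows, as in \cite{Bertsekas1976}, from the fact that the growing penalty weight drives the infeasibility $\chi^{(k_m)}$ to zero. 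If one prefers to decouple this, a standard safeguarded update — projecting $\lambda^{(k)},\mu^{(k)}$ onto a large fixed compact set — makes (iii) immediate without affecting the other arguments.

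With (i)--(iii) in hand, \cref{lemma:ALconvergence} gives $\{\hat z^{(m)},\hat\lambda^{(m)},\hat\mu^{(m)}\}\to\{z_*,\lambda_*,\mu_*\}$, a local optimum of \cref{prob:NLP}, and $\chi^{(k_m)}\to 0$; being a local optimum of the original problem, $z_*$ is feasible for \cref{prob:NLP}, so \texttt{SCvx*} converges to a feasible local optimum of \cref{prob:NLP}, as claimed. The two places I expect to require the most care are hypothesis (iii), because it intertwines the global ``reaching the neighborhood'' argument with the local contraction estimate of the augmented-Lagrangian iteration, and the quantitative implication ``$\Delta L^{(k_m)}\to 0\Rightarrow\norm{\nabla_z\mathcal{L}_{\hat w^{(m)}}(\hat z^{(m)},\hat\lambda^{(m)},\hat\mu^{(m)})}_2\to 0$'' in the second paragraph, which must rule out the trust region or the linearization error masking non-stationarity — precisely where $r^{(k)}\ge r_{\min}$ and the continuity in Assumption~1 enter.
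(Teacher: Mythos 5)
Your proposal is correct and follows essentially the same route as the paper: restrict to the subsequence of iterations where Line~\ref{line:ALupdateCriterion} fires (guaranteed by \cref{lemma:guaranteedALupdate}), use \cref{lemma:positivePredictedCost,lemma:limitPoint} to conclude $\norm{\nabla_z\mathcal{L}_{w^{(k)}}}_2\to 0$ along that subsequence, and then invoke \cref{lemma:ALconvergence}. The only difference is that you explicitly flag and argue the boundedness of $\{\lambda^{(k)},\mu^{(k)}\}$, a hypothesis of \cref{lemma:ALconvergence} that the paper's proof leaves implicit; that extra care is a strength, not a divergence in approach.
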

\begin{proof}
Let $\{z^{(k_i)}_* \} $ be a subsequence of $\{z^{(k)}_* \} $ that consists of the iterations where the multipliers are updated;
such subsequences are guaranteed to exist due to \cref{lemma:guaranteedALupdate}.
Then, \cref{eq:stationaryConditionUpdate} ensures $\delta^{(k_i)} > \delta^{(k_{i}+1)} $, and due to \cref{lemma:limitPoint}, $\Delta J^{(k_i)} \to 0 $ and $\delta^{(k_i)} \to 0$ in the limit.
Again due to \cref{lemma:limitPoint}, the limit point is a stationary point of \cref{prob:NLP_p}, satisfying $\nabla_z J^{(k)} = 0 $.
Then, $\nabla_z \mathcal{L}_{w^{(k)}} \to 0 $ also holds in the limit since every $z^{(k)}_*$ satisfies $g_{\mathrm{affine}} = 0 $ and $h_{\mathrm{cvx}} \leq 0 $ within convex programming.
Thus, the \texttt{SCvx*} iteration guarantees $\norm{\nabla_z \mathcal{L}_{w^{(k)}} }_2 \to 0$ in the limit, with the multiplier update \cref{eq:ALupdate}.
Therefore, as $w^{(k)} $ exceeds the threshold $w_*$ given in \cref{lemma:ALconvergence} after finite iterations, \texttt{SCvx*} achieves the global convergence to a feasible optimum of \cref{prob:NLP}.
\end{proof}

Remarks below discuss two key improvements that the \texttt{SCvx*} algorithm provides over the original \texttt{SCvx} algorithm.

\begin{remark}[Feasibility]
\label{remark:feasibility}
The converged solution generated by \texttt{SCvx*} is feasible to \cref{prob:NLP}, while the original \texttt{SCvx} algorithm does not provide such a feasibility guarantee.
\end{remark}

\begin{remark}[Accelerated convergence]
\label{remark:acceleration}
\texttt{SCvx*} iterates not only on the variable $z^{(k)} $ but also on the Lagrange multipliers $\lambda^{(k)} $ and $\mu^{(k)} $, which, besides providing the feasibility guarantee, facilitates the convergence by iteratively improving the multiplier estimate rather than using a fixed value.
\end{remark}

\subsection{Convergence rate}
\label{sec:convergenceRate}

Having the augmented Lagrangian method as the basis of the algorithm facilitates the analysis of the convergence rate of \texttt{SCvx*}.
Based on \cite{Bertsekas1982,Bertsekas1976}, linear or superlinear convergence rate of the Lagrangian multipliers can be achieved when $\delta^{(k)} $ decreases to zero as fast as $\norm{\lambda^{(k)} - \lambda_* }_2/w^{(k)} $.
To achieve this, we may replace \cref{eq:LagUpdateCond} by
\begin{align}
\abs{\Delta J^{(k)}} \leq \min{\{\delta^{(k)}, \eta \chi^{(k)} \}} ,
\label{eq:newALupdateCriterion}
\end{align}
where $\eta$ is a positive scalar.
With this criterion, Proposition 2 of \cite{Bertsekas1976} states that the augmented Lagrange multiplier iteration \cref{eq:ALupdate} converges to $z_*, \lambda_*, \mu_*$ superlinearly if $w^{(k)}\to \infty $, and linearly if $w^{(k)}\to w_{\max} < \infty $, where $w_{\max} \in (0, \infty)$ is the upper bound of the penalty weight.
It must be noted that these convergence rates are about the Lagrange multiplier iteration but not with respect to $k$.

The choice of $\eta$ can be arbitrary to achieve the above convergence rate in theory.
A simple yet effective approach is to initialize $\eta$ by $\infty$ at first, and then update it by
$\eta \gets \abs{\Delta J^{(k)}} / \chi^{(k)}$
when \cref{eq:newALupdateCriterion} is met for the first time.

Here, we must ensure that \texttt{SCvx*} retains the favorable property of guaranteed multiplier update (\cref{lemma:guaranteedALupdate}) under the stricter condition \cref{eq:newALupdateCriterion}.
\cref{lemma:guaranteedALupdateNew} addresses this.
Once the convergence in $\lambda,\mu$ is achieved, then $\lambda,\mu$ will not be updated anymore while $z^{(k)} $ converges to a feasible local minimum of \cref{prob:NLP} due to \cref{lemma:ALconvergence,lemma:limitPoint}.

\begin{lemma}[]
\label{lemma:guaranteedALupdateNew}
Suppose that \cref{eq:newALupdateCriterion} instead of \cref{eq:LagUpdateCond} is used for the multiplier update criterion.
Then, until the convergence in $\lambda$ and $\mu$ is achieved, the \texttt{SCvx*} iteration guarantees that the values of $\lambda$ and $\mu$ are updated within a finite number of iterations after their last update.
\end{lemma}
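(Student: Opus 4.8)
The plan is to mimic the contradiction argument used in the proof of Lemma~\ref{lemma:guaranteedALupdate}, but now carefully track the extra term $\eta\chi^{(k)}$ that appears in the stricter criterion \cref{eq:newALupdateCriterion}. Suppose, for contradiction, that after some iteration the multipliers $\lambda,\mu$ (and $w$) are never updated again, even though convergence in $\lambda,\mu$ has not yet been reached. Then from that point on $\lambda^{(k)},\mu^{(k)},w^{(k)}$ are frozen at fixed values, so \cref{lemma:limitPoint} applies: the sequence $\{z_*^{(k)}\}$ has limit points, and along some subsequence $\{z_*^{(k_i)}\}$ we have $\Delta J^{(k_i)}\to 0$. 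Since $\delta^{(k)}$ is also frozen at a fixed positive value $\bar{\delta}>0$ (it is only decreased when a multiplier update occurs), the condition $|\Delta J^{(k_i)}|<\delta^{(k_i)}=\bar\delta$ is eventually satisfied along the subsequence. So the only way \cref{eq:newALupdateCriterion} can fail to trigger is if the second term $\eta\chi^{(k_i)}$ stays bounded away from zero faster than $\Delta J^{(k_i)}$ — i.e. if $|\Delta J^{(k_i)}|>\eta\chi^{(k_i)}$ for all large $i$.

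The key step, then, is to rule this out by showing $\chi^{(k_i)}$ does not collapse to zero faster than $\Delta J^{(k_i)}$ along the limiting subsequence — unless we are already at convergence. Let $\hat z$ be a limit point of $\{z_*^{(k_i)}\}$; by \cref{lemma:limitPoint} it is a stationary point of the frozen penalty problem \cref{prob:NLP_p}, so $\nabla_z J^{(k)}(\hat z)=0$ with the current $w,\lambda,\mu$. If $\chi(\hat z)=\norm{g(\hat z),[h(\hat z)]_+}_2=0$, then $\hat z$ is feasible for \cref{prob:NLP} and, combined with stationarity and the multiplier update formulas \cref{eq:ALupdate}, the pair $(\hat z,\lambda,\mu)$ already satisfies the KKT conditions of \cref{prob:NLP} (the multiplier update \cref{eq:ALupdate} leaves $\lambda,\mu$ unchanged precisely when $g(\hat z)=0$ and the complementarity-type condition on $h$ holds) — i.e. convergence in $\lambda,\mu$ has in fact already been achieved, contradicting our hypothesis. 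Hence $\chi(\hat z)>0$, so $\chi^{(k_i)}$ is bounded below by a positive constant along a further subsequence, while $\Delta J^{(k_i)}\to 0$; therefore $|\Delta J^{(k_i)}|\le\eta\chi^{(k_i)}$ for all large $i$ (for any fixed $\eta>0$, or for the adaptively chosen $\eta$ once it has been set to a finite value). Together with $|\Delta J^{(k_i)}|<\bar\delta$, this means \cref{eq:newALupdateCriterion} is satisfied at some finite $k_i$, forcing a multiplier update — the desired contradiction.

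I would organize the write-up as: (i) set up the contradiction hypothesis and freeze $\lambda,\mu,w,\delta$; (ii) invoke \cref{lemma:limitPoint} to extract the subsequence with $\Delta J^{(k_i)}\to 0$ and a limit point $\hat z$ that is stationary for the frozen $J^{(k)}$; (iii) dichotomize on $\chi(\hat z)$, showing the $\chi(\hat z)=0$ branch means $\lambda,\mu$-convergence is already attained (excluded by hypothesis), and the $\chi(\hat z)>0$ branch gives $|\Delta J^{(k_i)}|\le\eta\chi^{(k_i)}$ eventually; (iv) conclude both parts of the $\min$ in \cref{eq:newALupdateCriterion} are met at a finite index, contradiction. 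The main obstacle is step (iii): one must argue cleanly that a stationary point of the frozen penalty problem with zero constraint violation is actually a KKT point of the original problem at which the AL multiplier update is a fixed point, so that ``$\chi=0$ at the limit'' genuinely coincides with ``$\lambda,\mu$ have converged.'' This uses the standard correspondence between AL stationarity plus feasibility and the original KKT system (essentially the argument already used in the proof of the main Theorem, via \cref{lemma:ALconvergence}); making it rigorous requires a little care about the $[\,\cdot\,]_+$ nonsmoothness in $P(\cdot)$ and the complementarity structure, but it is otherwise routine.
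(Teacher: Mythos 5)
Your proposal is correct and follows essentially the same route as the paper's proof: a contradiction argument that freezes $\lambda,\mu,w$, invokes \cref{lemma:limitPoint} to obtain a stationary limit point of the frozen penalty problem, and then shows that $\chi=0$ at such a point would force it to be a KKT point of \cref{prob:NLP} with the current multipliers (contradicting non-convergence of $\lambda,\mu$ under the uniqueness in Assumption~1), so that $\chi$ stays bounded away from zero and both branches of the $\min$ in \cref{eq:newALupdateCriterion} are eventually satisfied. The only cosmetic difference is that the paper restricts to equality constraints (converting inequalities via dummy variables) to sidestep the $[\,\cdot\,]_+$ bookkeeping you flag in step (iii).
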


\if\shortOrFull1 
\begin{proof}
See \cite{Oguri2023a} for the proof.
\end{proof}
\fi

\if\shortOrFull2 
\begin{proof}
For conciseness, the proof is focused on problems with equality constraints only, as any inequality constraints can be converted to equality constraints by introducing dummy variables without changing the results in the augmented Lagrangian framework (see Section 3.1 of \cite{Bertsekas1982}).
Thus, $\mu$ and $h$ are not explicitly considered in this proof.

It is clear from \cref{lemma:limitPoint} that the claim is true if $\chi^{(k)} > 0 $ holds until the convergence in $\lambda$ is achieved.
It is also clear that $\lambda^{(k)} \neq \lambda_* $ until the convergence in $\lambda$ is achieved.
Thus, let us show $\chi^{(k)} > 0 $ when $\lambda^{(k)} \neq \lambda_* $ by contradiction.

Suppose that there exists certain $\lambda^{(k)} (\neq \lambda_*) $ such that lead to $\chi^{(k)} = 0 $.
For $z^{(k)}_* $ that solves \cref{prob:CVX_p}, it is clear from \cref{eq:infeasibility} that $\chi^{(k)} = 0 $ if and only if $g(z^{(k)}_*) = 0 $.
Since $\chi^{(k)} = 0 $, \cref{eq:newALupdateCriterion} is not satisfied, and hence the values of $\lambda^{(k)}$ remain fixed until $\Delta J^{(k)} = 0 $ is achieved in the limit.
Due to \cref{lemma:limitPoint}, the limit point is a stationary point of \cref{prob:NLP_p}, satisfying 
$0 = \nabla_z J^{(k)} = \nabla_z f_0 + (\lambda + w g) \cdot \nabla_z g$.
Using $g = 0 $ due to $\chi^{(k)} = 0 $, this leads to 
$0 = \nabla_z f_0 + \lambda \cdot \nabla_z g$,
which implies that the limit point of $\{z^{(k)}_*\} $ is a feasible stationary point of \cref{prob:NLP}, i.e., $\nabla_z \mathcal{L} = 0$, and hence satisfies the KKT conditions of \cref{prob:NLP}, since every $z^{(k)}_*$ also satisfies $g_{\mathrm{affine}} = 0 $ and associated multiplier conditions within convex programming.
This contradicts $\lambda^{(k)} \neq \lambda_* $, and thus $\chi^{(k)} \neq 0 $ by contradiction, implying $\chi^{(k)} > 0 $ because $\chi^{(k)} $ must be non-negative.
\end{proof}
\fi


\section{Numerical Examples}
\label{sec:examples}

This section presents numerical examples to demonstrate \texttt{SCvx*} and compare the performance to \texttt{SCvx}.
Note that \cref{alg:SCvx*} boils down to \texttt{SCvx} by ignoring Lines \ref{line:ALupdateCriterion} to \ref{line:KKTupdate} and replacing \cref{eq:penaltyAL} by an $l_1$ penalty function $P(g,h,w) = w \norm{g}_1 + w \norm{[h]_+}_1 $.
CVX \cite{Grant2014} is used with Mosek \cite{Mosek2017}.

\texttt{SCvx*} parameters commonly used for the two examples are listed in \cref{t:params}.
In each example, $w^{(1)} $ is varied to investigate the performance of \texttt{SCvx*} and \texttt{SCvx} for different penalty weights.
$w_{\max} = 10^{8} $ is set for \texttt{SCvx*} to avoid numerical instability.
If the algorithm does not converge in $100$ iterations, it is terminated and deemed unconverged.

\begin{table}[tb]
\centering
\caption{\texttt{SCvx*} parameters ($\epsilon = \epsilon_{\mathrm{opt}} = \epsilon_{\mathrm{feas}}$)}
\label{t:params}
\begin{tabular}{lcccccccccccc}
\toprule
$ \epsilon $ & $ \{\rho_0, \rho_1, \rho_2\} $ & $\{\alpha_1, \alpha_2, \beta, \gamma\} $ & $\{r^{(1)}, r_{\min}, r_{\max}\} $
\\ \midrule
$10^{-5}$ & $\{0, 0.25, 0.7\} $ & $\{2, 3, 2, 0.9 \}$ & $\{0.1, 10^{-10}, 10\}$
\\ \bottomrule
\end{tabular}
\end{table}

\subsection{Example 1: Simple Problem with Crawling Phenomenon}
The first example is a simple non-convex optimization problem from \cite{Reynolds2020} to demonstrate that \texttt{SCvx*} can also overcome the so-called \textit{crawling phenomenon}, which is known to occur for a class of SCP algorithms.
The non-convex problem from \cite{Reynolds2020} is defined in the form of \cref{prob:NLP} as follows:
\begin{subequations}
\begin{align}
& \underset{-2\leq z \leq 2}{\min}
& & z_1 + z_2 \\
&\ \mathrm{s.t.}
& &     z_2- z_1^4 - 2z_1^3 + 1.2z_1^2 + 2z_1 = 0, \label{eq:ex1-ncvx}
\\
& & &   -z_2 - (4/3)z_1 - 2/3 \leq 0 \label{eq:ex1-cvx}
\end{align}%
\end{subequations}
which is solved by \texttt{SCvx*} and \texttt{SCvx} with various $w^{(1)} $.
The same initial reference point as \cite{Reynolds2020}, $\bar{z}^{(1)} = [1.5, 1.5] $, is used.

\cref{t:results1} summarizes the comparison of \texttt{SCvx*} and \texttt{SCvx} by listing the number of iterations required for convergence with respect to different values of $w^{(1)} $.
``N/A'' indicates non-convergence achieved within the maximum iteration limit ($=100$).
\cref{t:results1} illustrates that \texttt{SCvx*} constantly achieves the convergence regardless of the initial values of $w^{(k)} $;
this is in sharp contrast to the \texttt{SCvx} results, which successfully converge to a feasible local minimum only for the two cases: $w^{(1)} = 10$ and $100 $, emphasizing the sensitivity to the value of $w^{(1)} $ (which is held constant over iterations in \texttt{SCvx}).

\begin{table}[tb]
\centering
\caption{Example 1: Convergence Results (N/A: non-convergence)}
\label{t:results1}
\begin{tabular}{lcccccccccccc}
\toprule
$w^{(1)} $ value & $10^{-1}$ & $10^{0}$ & $10^{1}$ & $10^{2}$ & $10^{3}$ & $10^{4}$ & $10^{5}$ 
\\ \midrule
\texttt{SCvx*} \# ite. & \tb{39} & \tb{33} & \tb{31} & \tr{42} & \tb{40} & \tb{51} & \tb{56}
\\
\texttt{SCvx} \# ite. & \tr{N/A} & \tr{N/A} & \tr{35} & \tb{31} & \tr{N/A} & \tr{N/A} & \tr{N/A}
\\ \bottomrule
\end{tabular}
\end{table}

\begin{figure}[tb]
\centering \subfigure[\label{f:ex1:w1:SCvx*Plot} \texttt{SCvx*} result, 33 iterations]
{\includegraphics[width=0.47\linewidth]{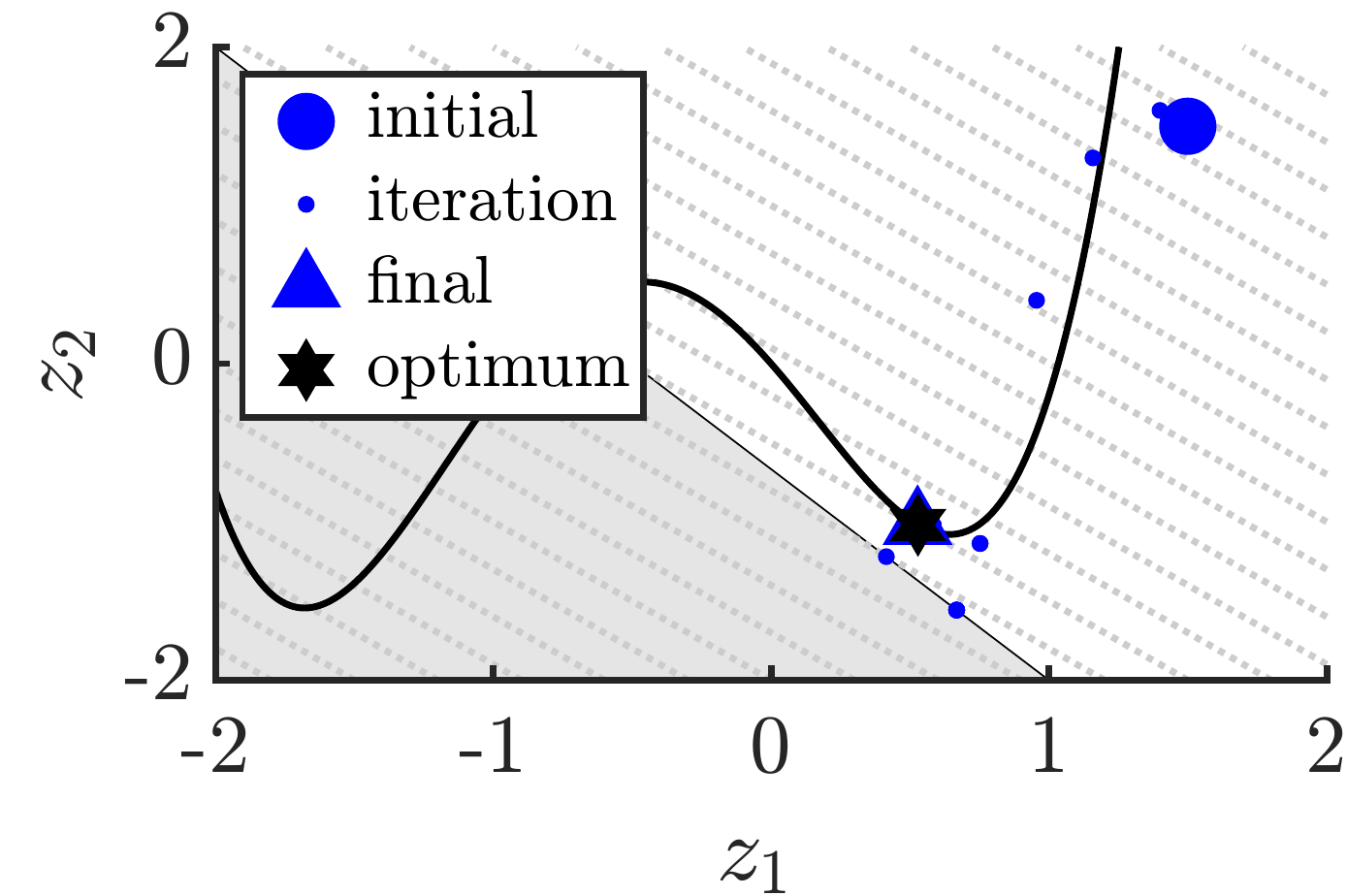}}
\centering \subfigure[\label{f:ex1:w1:SCvxPlot} \texttt{SCvx} result, \textit{not} converged]
{\includegraphics[width=0.47\linewidth]{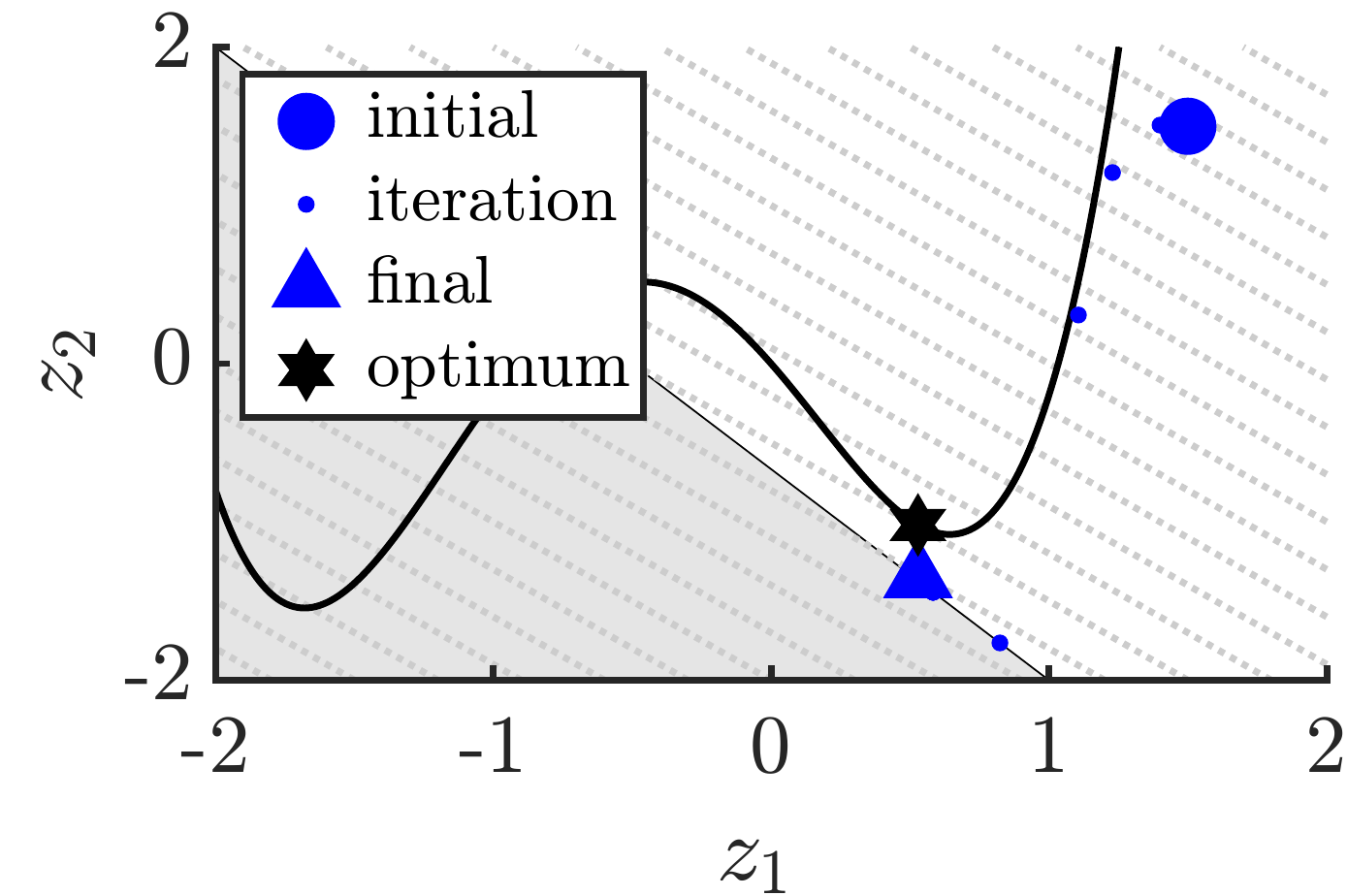}}
	\caption{\label{f:ex1:w1} Example 1 convergence behavior for $w^{(1)}=1 $ in the $z_1$-$z_2$ space. The black curve represents the non-convex equality constraint \cref{eq:ex1-ncvx}, and the gray shaded area is the infeasible area from \cref{eq:ex1-cvx}.}
\end{figure}

\cref{f:ex1:w1} depicts the convergence behavior for $w^{(1)} {=} 1 $.
It clarifies that the \texttt{SCvx*} iteration successfully converges to the optimum 
while \texttt{SCvx} is not able to satisfy the non-convex equality constraint (on the black curve).
This example illustrates two non-converging modes of \texttt{SCvx}:
1) non-improving feasibility for $w^{(1)}{=}10^{-1}, 10^0$; and 2) crawling phenomenon for $w^{(1)}{=}10^{3}, 10^{4}, 10^{5}$.
In contrast, \texttt{SCvx*} effectively addresses these non-converging modes, consistent with the two key improvements stated in \cref{remark:feasibility,remark:acceleration}.

\subsection{Example 2: Quad Rotor Path Planning}
The second example is a quad-rotor non-convex optimal control problem from the original \texttt{SCvx} literature \cite{Mao2019a}.
The problem is defined in the form of \cref{prob:OCP} as follows:
\begin{align}
\begin{aligned}
& \underset{x, u}{\min}
& & \sum_{s=1}^{N} \Gamma_s \Delta t \\
&\ \mathrm{s.t.}
& &     x_{s+1} = x_s + \int_{t_s}^{t_{s+1}}
\begin{bmatrix}
v \\ 
T/m - k_D\norm{v}_2 v + g
\end{bmatrix}
\diff t,
\ \forall s
\\ & & &   
\norm{p_s - p_{\mathrm{obj},j}}_2 \geq R_{\mathrm{obs},j},
\quad j=1,2,
\\ & & &   
x_1 = x_{\mathrm{ini}},\ 
x_N = x_{\mathrm{fin}},\ 
T_1 = T_N = -mg,
\\ & & &   
[1\ 0\ 0] \cdot p_s = 0,\ 
\norm{T_s}_2 \leq \Gamma_s,\ 
T_{\min} \leq \Gamma_s \leq T_{\max},
\\ & & &   
\cos{\theta_{\max}}\Gamma_s \leq [1\ 0\ 0] \cdot  T_s,
\quad \forall s
\end{aligned}
\nonumber
\end{align}
where $p, v\in\R^3$, and $m\in\R$ denote the position, velocity, and mass of the vehicle;
$T\in\R^3 $ is the thrust vector;
$\Gamma\in\R $ represents the thrust magnitude (at convergence);
$g = [-9.81, 0, 0]^\top\, \mathrm{m/s^2} $ is the gravity acceleration;
$k_D = 0.5 $ is the drag coefficient;
$p_{\mathrm{obj},j}$ and $R_{\mathrm{obj},j} $ are the position and radius of $j$-th obstacle (defined the same as \cite{Mao2019a});
$x_{\mathrm{ini}} = [0\, \mathrm{m}, 0\, \mathrm{m}, 0\, \mathrm{m}, 0\, \mathrm{m/s}, 0.5\, \mathrm{m/s}, 0\, \mathrm{m/s}]^\top $ and
$x_{\mathrm{fin}} = [0\, \mathrm{m}, 10\, \mathrm{m}, 0\, \mathrm{m}, 0\, \mathrm{m/s}, 0.5\, \mathrm{m/s}, 0\, \mathrm{m/s}]^\top $ are the initial and final states;
$\{T_{\min}, T_{\max}\} = \{1.0, 4.0\}\, \mathrm{N}$;
$\theta_{\max} = \pi/4 $.
The state and control variables are
$x_s = [r_s^\top, v_s^\top]^\top \in\R^6 $ and 
$u_s = [T_s^\top, \Gamma_s]^\top \in\R^4 $, where the zeroth-order-hold control is used for the discretization, i.e., $u_s = u(t)\ \forall t \in[t_s, t_{s+1}) $.
$t_N = 5.0 $ seconds with $N=31$.
For $\bar{z}^{(1)} $, the straight line that connects $x_{\mathrm{init}}$ and $x_{\mathrm{fin}}$ is used for $x_s$ while $-mg$ and $\norm{mg}_2 $ are used for $T_s $ and $\Gamma_s$, respectively.

\cref{t:results2} summarizes the convergence results for Example 2.
This suggests that the performance of \texttt{SCvx*} and \texttt{SCvx} are similar overall for this example, whereas a key difference is observed that \texttt{SCvx} struggles to converge to a feasible solution when $w^{(1)} = 10^{-1} $.
\texttt{SCvx*} constantly converges to a feasible local minimum irrelevant to the value of $w^{(1)} $.
This property is favorable especially for large-scale problems, where the user may not afford to tune $w^{(1)}$ \cite{Oguri2022f}.
On the other hand, this also provides a reassuring result that, despite the lack of the theoretical feasibility guarantee, \texttt{SCvx} can also perform well and may be good enough for relatively simple, small-scale optimal control problems.

\begin{table}[tb]
\centering
\caption{Example 2: Convergence Results}
\label{t:results2}
\begin{tabular}{lcccccccccccc}
\toprule
$w^{(1)} $ value & $10^{-1}$ & $10^{0}$ & $10^{1}$ & $10^{2}$ & $10^{3}$ & $10^{4}$ & $10^{5}$ 
\\ \midrule
\texttt{SCvx*} \# ite. & \tb{24} & \tr{17} & \tr{14} & \tb{11} & \tb{11} & \tb{11} & \tb{14}
\\
\texttt{SCvx} \# ite. & \tr{N/A} & \tb{9} & \tb{11} & \tr{13} & \tr{14} & \tr{15} & \tr{16}
\\ \bottomrule
\end{tabular}
\end{table}

\begin{figure}[tb]
\centering \subfigure[\label{f:ex2:w1e5:SCvx*Plot} \texttt{SCvx*} result, 11 iterations]
{\includegraphics[width=0.46\linewidth]{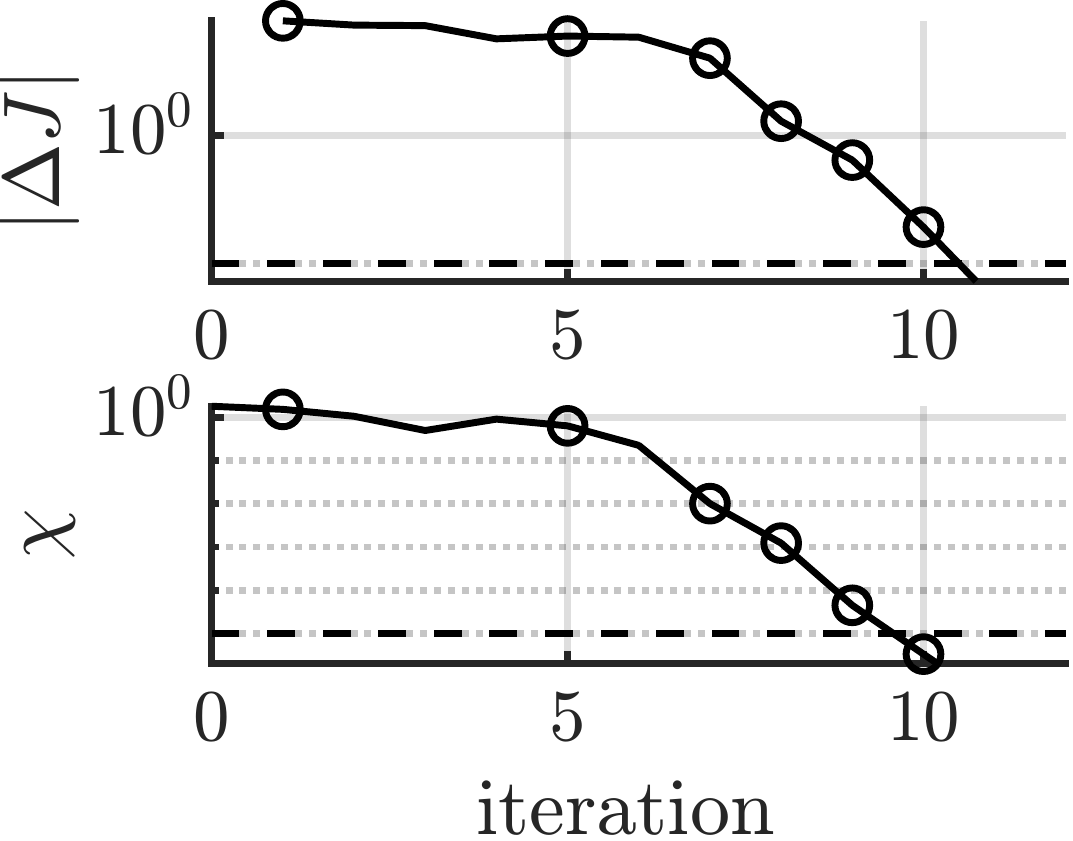}}
\centering \subfigure[\label{f:ex2:w1e5:SCvxPlot} \texttt{SCvx} result, 15 iterations]
{\includegraphics[width=0.46\linewidth]{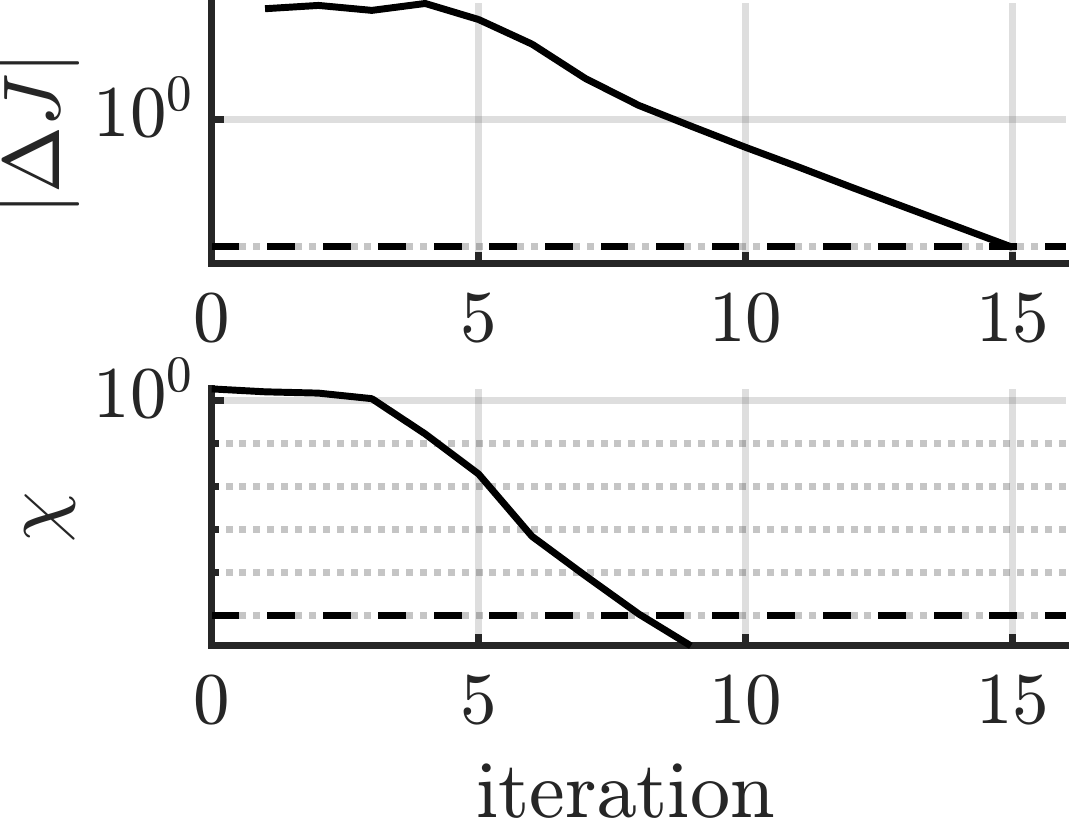}}
	\caption{\label{f:ex2:w1e5} Example 2 convergence behavior for $w^{(1)}=10^{4} $; $y$-axes represent the penalized objective improvement $\abs{\Delta J^{(k)}} $ (top) and the infeasibility $\chi^{(k)} $ (bottom).
	The circles in (a) indicate iterations with multiplier updates.}
\end{figure}

\cref{f:ex2:w1e5} presents the convergence behavior for $w^{(1)} = 10^4 $ in terms of $\abs{\Delta J^{(k)}} $ and $\chi^{(k)} $.
The circles in \cref{f:ex2:w1e5:SCvx*Plot} indicate the iterations when Line \ref{line:ALupdateCriterion} of \cref{alg:SCvx*} is satisfied and the multipliers are updated.
The dashed lines represent the tolerance $\epsilon {=} \epsilon_{\mathrm{opt}} {=} \epsilon_{\mathrm{feas}} (=10^{-5}) $.
While \texttt{SCvx} satisfies the constraints earlier, \texttt{SCvx*} achieves the overall convergence faster, likely due to the iterative estimate of multipliers that balances the progress in optimality and feasibility.

\section{Conclusions}
In this paper, a new SCP algorithm \texttt{SCvx*} is proposed to address the lack of feasibility guarantee in \texttt{SCvx} by leveraging the augmented Lagrangian framework.
Unlike \texttt{SCvx}, which uses a fixed penalty weight over iterations, 
\texttt{SCvx*} iteratively improves both the optimization variables and the Lagrange multipliers, facilitating the convergence.
Inheriting the favorable properties of \texttt{SCvx} and fusing those with the augmented Lagrangian method, \texttt{SCvx*} provides strong global convergence to a \textit{feasible} local optimum of the original non-convex optimal control problems with minimal requirements on the problem form.
The convergence rate of \texttt{SCvx*} is also analyzed, clarifying that linear/superlinear convergence rate with respect to the Lagrange multipliers can be achieved by slightly modifying the algorithm.
These theoretical results are demonstrated via numerical examples.



\bibliographystyle{ieeetr}

\end{document}